\renewcommand{\maketitle}{\bgroup\setlength{\parindent}{0pt}
\begin{flushleft}
  \textbf{\@title}

  \@author
\end{flushleft}\egroup
} 
\def\footnoterule{\kern-3\p@
  \hrule \@width 2in \kern 2.6\p@} 
\title{{\Large \textbf{Forking and invariant measures in NIP theories}}}
\author{Anand Pillay,\footnote[1]{Partially supported by NSF grants DMS-1760212 and DMS-2054271.} Atticus Stonestrom\\{\small Department of Mathematics, University of Notre Dame}}
\begin{document}
\maketitle

{\small\noindent\textbf{Abstract:} We give an example of an NIP theory $T$ in which there is a formula that does not fork over $\varnothing$ but has measure $0$ under any global $\varnothing$-invariant Keisler measure, and we show that this cannot occur if $T$ is also first-order amenable. \newline

\noindent\textbf{Notation:} Throughout $T$ will denote a complete theory in a first-order language $L$, with a monster model $\mathfrak{C}$. We will always be working over $\varnothing$. So: (i) if we say a formula or type forks (respectively divides), we mean that it forks (respectively divides) over $\varnothing$, (ii) by `definable' (respectively `type-definable') we mean definable (respectively type-definable) without parameters, and (iii) if we say two tuples $a,b$ are conjugate, we mean they have the same type over $\varnothing$, which we denote by $a\equiv b$.}

\section{Introduction}
This paper continues investigation of one of the themes of \citep{seven_authors}, namely the relationship between two natural ideals of $L(\mathfrak{C})$-formulas in a theory $T$: the ideal of formulas which fork, and the ideal of formulas which are given measure $0$ under any automorphism-invariant Keisler measure on $\mathfrak{C}$. The former is always contained in the latter (see for example Fact 1.2 in \citep{seven_authors}), and the other inclusion holds if $T$ is stable (see for example Corollary 1.4 in \citep{seven_authors}). But the other inclusion need not hold in general, and \citep{seven_authors} gives such an example where $T$ is simple. In this paper we will discuss the case when $T$ is NIP; for much more general context, we refer to the introduction of \citep{seven_authors}.

From here on, let $\phi(x,b)$ be an $L(\mathfrak{C})$-formula that does not fork, and let $(\ast)$ denote the statement that $\mu(\phi(x,b))>0$ for some global automorphism-invariant Keisler measure $\mu$. It is an early fact from \citep{adler_nip}, that, if $T$ is NIP, then $\phi(x,b)$ is contained in a global type which is Lascar-invariant over $\varnothing$; in particular, if $\dcl(\varnothing)$ is a model, then $\phi(x,b)$ is contained in some automorphism-invariant \textit{type}, and so $(\ast)$ holds. In \citep{hrushovski_pillay}, Hrushovski and the first author showed further that, if $T$ is NIP, then $\phi(x,b)$ is contained in a global type which is KP-invariant over $\varnothing$; by lifting the Haar measure on the KP-Galois group of $T$, they used this to show that, if $T$ is NIP, then $(\ast)$ holds provided $b\in\dcl(\varnothing)$ (see Section 2.1 below for more details). Nevertheless, we show here that $(\ast)$ need not hold in general when $T$ is NIP; this is the content of Section 4.

A notion related to the above theme is that of \textit{first-order amenability}, introduced by Hrushovski, Krupiński, and the first author in \citep{hrushovski_krupinski_pillay_1} and \citep{hrushovski_krupinski_pillay_2}. $T$ is said to be `amenable' if every type over $\varnothing$ extends to a global automorphism-invariant Keisler measure. Using the construction from \citep{hrushovski_pillay} mentioned above, \citep{hrushovski_krupinski_pillay_1} shows that, if $T$ is NIP, then $T$ is amenable if and only if $\varnothing$ is an extension base – ie no formula without parameters forks. In Section 3 we observe that $(\ast)$ holds when $T$ is NIP and amenable; in fact, we give two proofs, both adapting arguments of Chernikov and Simon. In contrast, as shown in \citep{seven_authors}, $(\ast)$ need not hold even if $T$ is simple and amenable, as the example found there \textit{is} amenable.

\section{Preliminaries}
\subsection{KP-Strong Types}
Recall that, for a small tuple of variables $x$, two $x$-tuples $b,c\in\mathfrak{C}^x$ are said to be \textit{Kim-Pillay equivalent} (over $\varnothing$), written $E_{\mathrm{KP}}(b,c)$, if they lie in the same equivalence class of every bounded type-definable equivalence relation on $\mathfrak{C}^x$. For small tuples of variables $x,y$ of the same arities and respective sorts, the relation $E_{\mathrm{KP}}(x,y)$ is bounded and type-definable, and the KP-strong type of $b$ is the equivalence class of $b$ under $E_{\mathrm{KP}}$. The KP-Galois group, denoted $\mathrm{Gal}_{\mathrm{KP}}(T)$, is the quotient of $\Aut(\mathfrak{C})$ by the (normal) subgroup of automorphisms $\sigma$ such that $E_{\mathrm{KP}}(b,\sigma(b))$ holds for every small tuple $b$. Given an automorphism $\sigma$ of $\mathfrak{C}$, we will denote by $\bar{\sigma}$ its image in $\mathrm{Gal}_{\mathrm{KP}}(T)$.

The type of a tuple over any small model determines its KP-strong type. Thus, for any small $M,N\prec\mathfrak{C}$, if we let $\bar{n}$ be an enumeration of $N$ and let $S_{\bar{n}}(M)$ denote the space of complete types over $M$ extending $\tp(\bar{n}/\varnothing)$, then the map taking $\tp(\sigma(\bar{n})/M)$ to $\bar{\sigma}$ gives a well-defined surjection from $S_{\bar{n}}(M)$ to $\mathrm{Gal}_{\mathrm{KP}}(T)$. The quotient topology on $\mathrm{Gal}_{\mathrm{KP}}(T)$ induced by this map is independent of the choice of $M,N$, and makes $\mathrm{Gal}_{\mathrm{KP}}(T)$ into a compact Hausdorff topological group; see for example \citep{ziegler}.

In particular, $\mathrm{Gal}_{\mathrm{KP}}(T)$ is equipped with a bi-invariant Haar measure of weight $1$, ie a regular Borel measure $\eta$ such that $\eta(\mathrm{Gal}_{\mathrm{KP}}(T))=1$ and such that $\eta(\bar{\sigma}S\bar{\tau})=\eta(S)$ for any Borel subset $S\subseteq\mathrm{Gal}_{\mathrm{KP}}(T)$ and any $\bar{\sigma},\bar{\tau}\in \mathrm{Gal}_{\mathrm{KP}}(T)$.

\subsection{Forking in NIP Theories}
In this section assume that $T$ is NIP. Recall that a global type $p(x)\in S_x(\mathfrak{C})$ is `KP-invariant' if, for any $L(\mathfrak{C})$-formula $\phi(x,b)$ and any $b'$ with the same KP-strong type as $b$, we have $p(x)\vdash \phi(x,b)\leftrightarrow\phi(x,b')$. From \citep{hrushovski_pillay} we have the following:
\begin{fact}
    A global type does not fork if and only if it is KP-invariant.
\end{fact}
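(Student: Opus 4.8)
The plan is to prove the two implications separately. The direction from KP-invariance to non-forking is elementary (and does not use NIP); the converse is the substantive one, and is essentially due to Adler and to Hrushovski--Pillay.

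\textbf{($\Leftarrow$) A KP-invariant global type does not fork.} Let $p\in S_x(\mathfrak{C})$ be KP-invariant. I first show that no formula in $p$ divides over $\varnothing$. Suppose, towards a contradiction, that $\phi(x,b)\in p$ divides; then, by the usual extraction argument (Ramsey plus compactness), there is a $\varnothing$-indiscernible sequence $(b_i)_{i<\omega}$ with $b_0=b$ such that $\{\phi(x,b_i):i<\omega\}$ is inconsistent. Fix $i\geq 1$: since $0<i$, the sequence $(b_0,b_i,b_{i+1},b_{i+2},\dots)$ is a subsequence of $(b_j)_{j<\omega}$ taken in the induced order, hence is again $\varnothing$-indiscernible, so $b_0$ and $b_i$ lie on a common $\varnothing$-indiscernible sequence. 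In particular they have the same Lascar strong type, and therefore $E_{\mathrm{KP}}(b_0,b_i)$ holds, since the Lascar strong type refines every bounded invariant equivalence relation and $E_{\mathrm{KP}}$ is one such. As $p$ is KP-invariant and $\phi(x,b_0)\in p$, we get $\phi(x,b_i)\in p$ for all $i<\omega$, contradicting the inconsistency of $\{\phi(x,b_i):i<\omega\}$. Finally, to pass from non-dividing to non-forking: if some $\psi(x)\in p$ forked over $\varnothing$, it would imply a finite disjunction of formulas each dividing over $\varnothing$, and since $p$ is complete one of these disjuncts would belong to $p$, contradicting what was just shown.

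\textbf{($\Rightarrow$) A non-forking global type is KP-invariant.} This is the hard direction. By Adler's theorem \citep{adler_nip} that in an NIP theory a global type not forking over $\varnothing$ is invariant under Lascar strong type, $p$ is at least Lascar-invariant. The shape of that argument is the standard NIP one: if $p$ were not Lascar-invariant, pick $\phi(x,y)$ and $b,b'$ of the same Lascar strong type with $\phi(x,b)\wedge\neg\phi(x,b')\in p$; this conjunction, lying in $p$, does not divide over $\varnothing$, so --- using a finite chain of $\varnothing$-indiscernible sequences connecting $b$ to $b'$ --- one manufactures (this is the core of Adler's argument) a single $\varnothing$-indiscernible sequence along which the truth value in $p$ of $\phi(x,\cdot)$ alternates infinitely often, and realizing $p$ on it contradicts NIP. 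What must then be added, following \citep{hrushovski_pillay}, is the strengthening of ``same Lascar strong type'' to ``$E_{\mathrm{KP}}$-equivalent'', which amounts to showing $p$ is invariant over $\mathrm{bdd}(\varnothing)$. For this I would try to dispense with the chain of indiscernible sequences and instead exploit that $\{(y,y'):E_{\mathrm{KP}}(y,y')\}$ is type-definable over $\varnothing$ with boundedly many classes: type-definability should let one still transport a pair $(b,b')$ with $E_{\mathrm{KP}}(b,b')$ along a $\varnothing$-indiscernible sequence without leaving $E_{\mathrm{KP}}$, recovering the alternation that contradicts NIP; ultimately this should come down to the compact Hausdorff topological structure of $\mathrm{Gal}_{\mathrm{KP}}(T)$ recalled in Section 2.1.

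I expect this last step --- passing from Lascar-invariance to KP-invariance --- to be the main obstacle. Indiscernible-sequence arguments by their nature produce only Lascar-equivalences, while $E_{\mathrm{KP}}$ is in general strictly coarser than Lascar equivalence (their difference being the closure of the identity in $\mathrm{Gal}_{\mathrm{L}}(T)$), so Adler's theorem genuinely does not suffice; put another way, one is being asked to show invariance of $p$ over $\mathrm{bdd}(\varnothing)$, which is not a model, so the textbook argument that non-forking over a model yields invariance over it does not directly apply. Everything else --- the easy direction, the reduction of forking to dividing for formulas of a complete type, and the basic facts relating Lascar strong types, $E_{\mathrm{KP}}$, and indiscernible sequences --- is routine.
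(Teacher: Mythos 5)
Note first that the paper does not prove this statement; it is recorded as Fact~2.1 and cited directly from \citep{hrushovski_pillay}, so there is no in-paper argument to compare against. Your ($\Leftarrow$) direction is correct and complete: a KP-invariant global type is in particular Lascar-invariant (since Lascar equivalence refines $E_{\mathrm{KP}}$), and a Lascar-invariant type cannot contain a dividing formula (any two entries of a $\varnothing$-indiscernible sequence are Lascar-equivalent), hence by the ideal property cannot contain a forking formula; as you observe, NIP plays no role here.

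Your ($\Rightarrow$) direction contains a genuine gap which you yourself flag, and your proposed heuristic for bridging it does not work as stated. Adler gives Lascar-invariance of a non-forking global type in NIP, but passing from Lascar-invariance to KP-invariance (equivalently, invariance over $\mathrm{bdd}(\varnothing)$) is the real content of the Hrushovski--Pillay result. The obstruction to your plan is precisely that $E_{\mathrm{KP}}(b,b')$ does \emph{not} supply a $\varnothing$-indiscernible sequence, or chain of such, connecting $b$ to $b'$ --- that is exactly what Lascar-equivalence provides, and what $E_{\mathrm{KP}}$, being strictly coarser in general, does not --- so ``transporting the pair $(b,b')$ along an indiscernible sequence without leaving $E_{\mathrm{KP}}$'' has no starting point: there is no sequence to transport along. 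The argument in \citep{hrushovski_pillay} proceeds along different lines: using NIP in the form of bounded alternation of $\{i : p\vdash\phi(x,b_i)\}$ along $\varnothing$-indiscernible sequences (and the resulting Borel definability of $p$ over small models), one shows that for each $\phi$ the bounded invariant equivalence relation $b\sim_\phi b' \iff p\vdash\phi(x,b)\leftrightarrow\phi(x,b')$ is actually type-definable over $\varnothing$; KP-invariance then follows because $E_{\mathrm{KP}}$ refines every bounded type-definable equivalence relation. You should either reproduce that argument in full or, as the paper does, treat the statement as a cited black box.
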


In particular, given a global non-forking type $p(x)\in S_x(\mathfrak{C})$ and an $L(\mathfrak{C})$-formula $\phi(x,b)$, the subset $S_{p,\phi(x,b)}\subseteq\mathrm{Gal}_{\mathrm{KP}}(T)$ given by $\{\bar{\sigma}:\sigma p(x)\vdash \phi(x,b)\}$ is well-defined. Again from \citep{hrushovski_pillay}, we have:

\begin{fact}
    If $p(x)\in S_x(\mathfrak{C})$ does not fork, then $S_{p,\phi(x,b)}$ is Borel for every $L(\mathfrak{C})$-formula $\phi(x,b)$. In particular, taking $\mu_p(\phi(x,b)):=\eta(S_{p,\phi(x,b)})$ for each $L(\mathfrak{C})$-formula $\phi(x,b)$ gives a well-defined Keisler measure on $\mathfrak{C}^x$, which is automorphism-invariant by the left-invariance of $\eta$.
\end{fact}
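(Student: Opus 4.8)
The statement is from \citep{hrushovski_pillay}, and the plan is to reconstruct the proof. Write $D:=S_{p,\phi(x,b)}$; the substance is to show that $D$ is Borel, after which the remaining assertions are routine. I would relate $D$ to the $\phi$-definition of $p$ over a well-chosen small model, via the quotient map defining the topology on $\mathrm{Gal}_{\mathrm{KP}}(T)$, then apply Borel-definability of invariant types in NIP theories to that $\phi$-definition, and finally transfer the conclusion back along the quotient map, using that it is a closed map.

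First, $p$ is invariant over every small model $M\prec\mathfrak{C}$: by the first Fact above $p$ is KP-invariant, and any automorphism of $\mathfrak{C}$ fixing $M$ pointwise fixes every KP-strong type (as a type over a small model determines a KP-strong type), hence fixes $p$. Fix such an $M$ with $b\in M$, let $\bar m$ enumerate $M$, and let $y$ be the subtuple of variables of $\bar m$ corresponding to $b$. Consider the map $\pi\colon S_{\bar m}(M)\to\mathrm{Gal}_{\mathrm{KP}}(T)$ sending $\tp(\sigma(\bar m)/M)$ to $\bar\sigma^{-1}$; this is the surjection described in the Preliminaries composed with the inversion homeomorphism of $\mathrm{Gal}_{\mathrm{KP}}(T)$, hence again a topological quotient map onto $\mathrm{Gal}_{\mathrm{KP}}(T)$. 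Since $\sigma^{-1}p\vdash\phi(x,b)$ iff $\phi(x,\sigma b)\in p$, and since KP-invariance of $p$ (together with the fact that a type over $M$ determines a KP-strong type) makes ``$\phi(x,c)\in p$'' depend only on $\tp(c/M)$, one checks that
\[
\pi^{-1}(D)=\rho^{-1}(D_\phi),\qquad\text{where}\qquad D_\phi:=\{\,q\in S_y(M):\phi(x,c)\in p\text{ for }c\models q\,\},
\]
and $\rho\colon S_{\bar m}(M)\to S_y(M)$ is the continuous restriction map; the role of $b\in M$ here is exactly that $\tp(\sigma(\bar m)/M)$ records $\tp(\sigma b/M)$.

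Now $D_\phi$ is the $\phi$-definition of the $M$-invariant type $p$, so by the standard fact (due to Hrushovski and Pillay, \citep{hrushovski_pillay}) that invariant types in NIP theories are Borel-definable, $D_\phi$ is Borel; moreover, since $\phi$ has finite alternation rank ($T$ being NIP), the proof in fact puts $D_\phi$ at a fixed finite level of the Borel hierarchy, so that both $D_\phi$ and its complement $S_y(M)\setminus D_\phi=\{q:\neg\phi(x,c)\in p\}$ are $F_\sigma$. Pulling back along $\rho$, both $\pi^{-1}(D)$ and its complement in $S_{\bar m}(M)$ are $F_\sigma$. Since $\pi$ is a continuous surjection from the compact space $S_{\bar m}(M)$ to the Hausdorff space $\mathrm{Gal}_{\mathrm{KP}}(T)$, it is a closed map, hence sends $F_\sigma$ sets to $F_\sigma$ sets; as $\pi$ is onto, $D=\pi(\pi^{-1}(D))$ and $\mathrm{Gal}_{\mathrm{KP}}(T)\setminus D$ are both $F_\sigma$, so $D$ is Borel. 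For the remaining assertions: $\mu_p(x=x)=\eta(\mathrm{Gal}_{\mathrm{KP}}(T))=1$ and $\mu_p(x\neq x)=0$; if $\chi_1(x),\chi_2(x)$ are $L(\mathfrak{C})$-formulas with $\chi_1\wedge\chi_2$ inconsistent then $S_{p,\chi_1\vee\chi_2}=S_{p,\chi_1}\sqcup S_{p,\chi_2}$ (each $\sigma p$ being a complete consistent type), so $\mu_p$ is finitely additive and hence a Keisler measure on $\mathfrak{C}^x$; and a short computation gives $S_{p,\phi(x,\tau b)}=\bar\tau\cdot S_{p,\phi(x,b)}$ for $\tau\in\Aut(\mathfrak{C})$, so $\mu_p$ is automorphism-invariant by the left-invariance of $\eta$.

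I expect the crux to be the combination of the Borel-definability input with the transfer across the quotient: one needs not merely that $D_\phi$ be Borel but that it lie at a fixed finite level of the hierarchy, since the $\pi$-image of an arbitrary ($\pi$-saturated) Borel subset of $S_{\bar m}(M)$ need not be Borel, whereas the image of an $F_\sigma$ set under the closed map $\pi$ is again $F_\sigma$. The rest --- extracting the sharp, finite-level form of Borel-definability, and keeping the various quotient presentations compatible under inversion and under the choice of $M$ --- is bookkeeping, though it wants some care.
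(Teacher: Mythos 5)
Your reconstruction is correct and, as far as I can tell, follows the same route as the cited source~\citep{hrushovski_pillay} (the paper itself only states Fact~2.2, with no proof): pass to invariance over a small model $M$ containing $b$, relate $S_{p,\phi(x,b)}$ to the pullback of the $\phi$-definition along the quotient presentation of $\mathrm{Gal}_{\mathrm{KP}}(T)$ via $S_{\bar m}(M)$, and then use Borel-definability together with the fact that the quotient map is closed. The verification that $\pi^{-1}(D)=\rho^{-1}(D_\phi)$, the closed-map transfer, and the finite-additivity and invariance computations all check out.

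One small caveat on the step you yourself flag as the crux: ``$D_\phi$ lies at a fixed finite level of the Borel hierarchy'' does not by itself yield that $D_\phi$ and its complement are $F_\sigma$ --- e.g.\ a $G_\delta$ set is at a finite level but need not be $F_\sigma$, and (crucially for you) closed maps preserve $F_\sigma$ but not $G_\delta$. What one should say is that the Borel-definability proof in NIP exhibits $D_\phi$ explicitly as the image, under a restriction (hence closed) map $S_y(Ma_0a_1\dots)\to S_y(M)$ for a Morley sequence $(a_i)$ of $p$ over $M$, of the $F_\sigma$ set $\bigcup_n\bigcap_{m\geq n}[\phi(a_m,y)]$, and symmetrically for the complement using $\neg\phi$. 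That gives the genuine $F_\sigma$/$F_\sigma$-complement structure your argument needs, rather than merely ``finite level.'' With that justification made explicit, the argument is complete.
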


Now, recall that a small subset $B\subset\mathfrak{C}$ is called an `extension base' if no type over $B$ forks over $B$. We have the following from \citep{chernikov_kaplan}:
\begin{fact}
    Suppose $B$ is an extension base. Then an $L(\mathfrak{C})$-formula forks over $B$ if and only if it divides over $B$. Moreover, given an $L$-formula $\psi(x,y)$, the set of all $c\in\mathfrak{C}^y$ such that $\psi(x,c)$ forks over $B$ is type-definable over $B$.
\end{fact}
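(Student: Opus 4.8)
I would reprove the theorem of Chernikov and Kaplan \citep{chernikov_kaplan}, using only that $T$ is NIP, hence NTP$_2$. That dividing over $B$ implies forking over $B$ holds by definition, so the content is the converse together with the ``moreover'' clause.

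For the converse, suppose an $L(\mathfrak{C})$-formula forks over $B$. Collecting the finitely many parameters involved into a single tuple $c$, and using that dividing over $B$ is preserved under adjoining dummy parameters, I may assume the formula is $\phi(x,c)$ with $\phi(x,c)\vdash\bigvee_{i<n}\chi_i(x,c)$, each $\chi_i(x,c)$ dividing over $B$. Since $B$ is an extension base and $T$ is NTP$_2$, $\tp(c/B)$ has a strict Morley sequence $(c_t)_{t<\omega}$ over $B$ with $c_0=c$ — a Morley sequence, starting at $c$, in a global extension of $\tp(c/B)$ that is both $B$-invariant and finitely satisfiable over $B$; producing such a sequence is one place the extension-base hypothesis is used. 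Suppose towards a contradiction that $\phi(x,c)$ does not divide over $B$. Then $\{\phi(x,c_t):t<\omega\}$ is consistent; realizing it by $d$, we get $d\models\bigvee_{i<n}\chi_i(x,c_t)$ for all $t$, so by pigeonhole there is $i^*<n$ with $d\models\chi_{i^*}(x,c_t)$ for infinitely many $t$, and hence — after passing to that subsequence (still a strict Morley sequence over $B$, whose first element realizes $\tp(c/B)$) — for all $t$; so $\{\chi_{i^*}(x,c_t):t<\omega\}$ is consistent. But $\chi_{i^*}(x,c)$ divides over $B$, and the key NTP$_2$ input is that dividing over $B$ is witnessed along every strict Morley sequence over $B$ — this is (a consequence of) the ``Broom Lemma'' — forcing $\{\chi_{i^*}(x,c_t):t<\omega\}$ to be inconsistent, a contradiction. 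Hence $\phi(x,c)$ divides over $B$.

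For the moreover clause, fix an $L$-formula $\psi(x,y)$ and put $D=\{c\in\mathfrak{C}^y:\psi(x,c)\text{ forks over }B\}$. By the part just proved, $D=\{c:\psi(x,c)\text{ divides over }B\}$, and for each $k$ the set $D_k=\{c:\psi(x,c)\ k\text{-divides over }B\}$ is type-definable over $B$: it is the projection to the first coordinate of the set of tuples $(c_0,\dots,c_{k-1})$ that extend to an infinite $B$-indiscernible sequence and satisfy $\neg\exists x\,\bigwedge_{j<k}\psi(x,c_j)$, which is type-definable over $B$ because indiscernibility is, because the space of EM-types over $B$ is closed, and because projections of type-definable sets are type-definable. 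A priori $D=\bigcup_k D_k$ is only a countable increasing union; the crux is to show that it stabilizes, equivalently that dividing of $\psi(x,c)$ over $B$ is a closed condition on $\tp(c/B)$, and this is once more where NTP$_2$ and extension-baseness of $B$ enter — the required uniformity (a single $k$ suffices, detectably along strict Morley sequences over $B$) is among the consequences of NTP$_2$ established in \citep{chernikov_kaplan}.

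The step I expect to be the main obstacle is this NTP$_2$ input — that dividing over $B$ is witnessed along every strict Morley sequence over $B$, together with the fact that extension bases admit strict Morley sequences — since it is the genuine combinatorial content, proved via the Broom Lemma's control of arrays; the na\"ive version of the argument, with an arbitrary $B$-indiscernible sequence in place of a strict Morley sequence, fails exactly because such a sequence need not witness dividing of whichever $\chi_{i^*}$ the pigeonhole produces. Everything else — the normal-form reduction, the pigeonhole, and the topology of $S_y(B)$ — is routine.
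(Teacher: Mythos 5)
This statement appears in the paper as a ``Fact'' cited from Chernikov--Kaplan \citep{chernikov_kaplan}; the paper gives no proof of its own, so there is no internal argument to compare against, only the cited one. Your sketch of the main equivalence is a faithful reconstruction of the CK argument: you correctly isolate the two genuine NTP$_2$ inputs (existence of strict Morley sequences over extension bases, and ``Kim's lemma for NTP$_2$'' --- dividing is witnessed along every strict Morley sequence, via the Broom Lemma), and the pigeonhole-along-a-strict-Morley-sequence argument, including the observation that a subsequence of a strict Morley sequence is again one, is exactly right. One terminological imprecision: a strict Morley sequence is a Morley sequence in a \emph{strictly non-forking} (strictly invariant) global extension of $\tp(c/B)$, which is the right notion over an arbitrary extension base; ``both $B$-invariant and finitely satisfiable over $B$'' is really the notion of strict coheir, appropriate when $B$ is a model.

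The ``moreover'' clause is where your sketch is thinnest. Your route --- show each $D_k$ is type-definable over $B$ (correct, via compactness: the set of $k$-tuples extending to an infinite $B$-indiscernible sequence is type-definable, and projections of type-definable sets over small sets are type-definable by a compactness argument you should spell out), then argue $\bigcup_k D_k$ stabilizes at a single $k_\psi$ --- is plausible, and in NIP the stabilization is in fact elementary: by the alternation number of $\psi$, along any $B$-indiscernible sequence, inconsistency of $\{\psi(x,c_t)\}$ is automatically $k$-inconsistency for a $k$ depending only on $\psi$, so $D=D_{k_\psi}$. But you defer this exactly where the work is, and I am not confident that CK establish it in this form in the general NTP$_2$ setting; their proof of type-definability goes through the uniqueness (up to $B$-conjugacy) of strict Morley sequences in a fixed strictly-invariant type and the characterization of dividing along them, from which the needed uniformity is extracted rather than assumed. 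Since this paper only needs the NIP case, your sketch suffices for its purposes, but as a reconstruction of the cited theorem you should either supply the NIP-specific VC bound or trace the stabilization to the actual CK machinery rather than asserting it.
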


We will also need the following, which is Proposition 25 of \citep{chernikov_simon_pq}; it is stated and proved there for the case when $\dcl(\varnothing)$ is a model, but precisely the same proof works over any extension base, so we repeat the proof here for clarity without the assumption that $\dcl(\varnothing)$ is a model. For the statement of the $(p,q)$-property and the $(p,q)$-theorem see Section 0.4 in \citep{chernikov_simon_pq}.

\begin{fact}
Suppose $\varnothing$ is an extension base, that $\phi(x,y)$ is an $L$-formula, and that $\pi(y)$ is a partial type over $\varnothing$ such that $\phi(x,b)$ does not fork for all $b\models\pi$. Then there are finitely many non-forking global types $p_1,\dots,p_n\in S_x(\mathfrak{C})$ such that, for every $b\models\pi$, there is some $p_i$ concentrated on $\phi(x,b)$.
\end{fact}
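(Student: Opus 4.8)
The plan is to deduce the statement from the $(p,q)$-theorem for the NIP formula $\phi$, applied inside the space of global non-forking types in $x$. Let $P\subseteq S_x(\mathfrak C)$ be the set of types that do not fork; since "does not fork" means "implies no finite disjunction of dividing formulas", $P$ is an intersection of clopen sets, hence a closed, hence compact, subspace. For $b\models\pi$ put $U_b=\{p\in P:\phi(x,b)\in p\}$, a closed subset of $P$, nonempty by hypothesis. A finite set $\{p_1,\dots,p_n\}\subseteq P$ meeting every $U_b$ is precisely what is asked for, so it suffices to find a finite transversal of the family $\mathcal F=\{U_b:b\models\pi\}$. The VC dimension of $\mathcal F$ is at most the dual VC dimension of $\phi$, which is finite as $\phi$ is NIP; so by the $(p,q)$-theorem it is enough to verify the $(p,q)$-property for suitable $p\ge q$: among any $p$ realizations $b_1,\dots,b_p$ of $\pi$ there should be $i_1<\dots<i_q$ with $\phi(x,b_{i_1})\wedge\dots\wedge\phi(x,b_{i_q})$ non-forking.

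I would obtain the $(p,q)$-property from a Keisler measure. Suppose there is a global Keisler measure $\mu$ on $S_x(\mathfrak C)$ with $\mathrm{supp}(\mu)\subseteq P$ and a constant $\epsilon>0$ such that $\mu(\phi(x,b))\ge\epsilon$ for all $b\models\pi$. Given $b_1,\dots,b_p\models\pi$, the function $\sum_i\mathbf 1_{[\phi(x,b_i)]}$ has $\mu$-integral $\ge\epsilon p$, so a first-moment count produces a set $S$ with $|S|\ge\lceil\epsilon p\rceil$ and $\mu\big(\bigcap_{i\in S}[\phi(x,b_i)]\big)>0$; a set of positive $\mu$-measure meets $\mathrm{supp}(\mu)\subseteq P$, so $\bigwedge_{i\in S}\phi(x,b_i)$ lies in a non-forking type, and choosing $p$ with $\lceil\epsilon p\rceil$ exceeding the VC dimension of $\mathcal F$ gives the $(p,q)$-property with $q=\lceil\epsilon p\rceil$. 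To construct $\mu$, I would first invoke Fact 3: the set of $c$ with $\phi(x,c)$ forking is type-definable over $\varnothing$ and disjoint from the closed set $[\pi]$, so by compactness $\pi$ may be replaced by a single formula $\theta(y)$ for which $\phi(x,c)$ is non-forking for all $c\models\theta$. Two further ingredients: first, $T$ is amenable, being NIP with $\varnothing$ an extension base \citep{hrushovski_krupinski_pillay_1}; second, any Lascar-invariant global Keisler measure $\nu$ is automatically non-forking — if $\phi(x,b)\vdash\bigvee_{i<k}\psi_i(x,b_i)$ with each $\psi_i$ dividing along a $\varnothing$-indiscernible sequence $(b_i^m)_{m<\omega}$ with $b_i^0=b_i$, then all $b_i^m$ share a Lascar strong type, so $\nu(\psi_i(x,b_i^m))$ is independent of $m$, and the inconsistency of $\{\psi_i(x,b_i^m):m<\omega\}$ forces that value to be $0$, whence $\nu(\phi(x,b))=0$. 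So it is enough to build a Lascar-invariant $\mu$ with $\mu(\phi(x,b))\ge\epsilon$ for all $b\models\theta$; concretely, for each complete $r\supseteq\theta$ one picks a Lascar-invariant global type $p_r$ with $\phi(x,b)\in p_r$ for every $b\models r$ — available from the fact of \citep{adler_nip} recalled in the introduction, using that all $b\models r$ are conjugate (so $\dcl(\varnothing)$ need not be a model) — and integrates the point masses $\delta_{p_r}$ against an invariant Keisler measure on $S_y(\mathfrak C)$ extending $\theta$, which amenability provides.

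The main obstacle, I expect, is exactly the uniform lower bound $\mu(\phi(x,b))\ge\epsilon$: an invariant measure naively spreads its mass over the $y$-parameters, so $\mu(\phi(x,b))$ can collapse to $0$ for individual $b$, and it is the reduction to a single formula via Fact 3, together with the coherent (in $r$) choice of the types $p_r$, that must be leveraged to prevent this; measurability of the integrand defining $\mu$ also has to be checked. The remaining steps — compactness of $P$, the VC bound, the first-moment argument, and the appeal to the $(p,q)$-theorem — are then routine.
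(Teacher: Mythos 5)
Your opening moves match the paper's: you restrict to the closed set $P$ of non-forking global types, set up the family $\mathcal F=\{U_b:b\models\pi\}$, note NIP gives a finite VC bound, and plan to invoke the $(p,q)$-theorem. Where you diverge is in how you verify the $(p,q)$-\emph{property} for $\mathcal F$, and that is exactly where your argument has a genuine gap.

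The paper verifies the $(p,q)$-property with a short combinatorial argument: if it fails, then, using that the set of tuples $\bar c$ with $\bigwedge_{i\in[q]}\phi(x,c_i)$ forking is type-definable over $\varnothing$ (Fact 2.3), compactness and Ramsey produce a $\varnothing$-indiscernible sequence $(b_i)$ of realizations of $\pi$ with every $q$-fold conjunction forking; but $\phi(x,b_0)$ extends to a global non-forking, hence Lascar-invariant, type $q(x)$, which then contains every $\phi(x,b_i)$, contradiction. No measures appear at this stage. You instead want the $(p,q)$-property to fall out of a first-moment bound against a global $\varnothing$-invariant measure $\mu$ with $\mathrm{supp}(\mu)\subseteq P$ and $\mu(\phi(x,b))\ge\epsilon$ uniformly over $b\models\theta$. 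The first-moment argument itself is fine, but producing such a $\mu$ is the whole difficulty, and your proposed construction does not give it. In $\mu=\int\delta_{p_r}\,d\nu(r)$ one has $\mu(\phi(x,b))=\nu(\{r:p_r\vdash\phi(x,b)\})$; the Lascar-invariant type $p_r$ is chosen so that $p_r\vdash\phi(x,b')$ for $b'\models r$, where $b'$ is a realization of a complete type over $\mathfrak C$ and hence lives outside $\mathfrak C$. For a parameter $b\in\mathfrak C$, membership $p_r\vdash\phi(x,b)$ then holds only when $b$ has the same Lascar strong type as such a $b'$, and there is no reason this holds for a $\nu$-positive set of $r$, let alone with a bound uniform in $b\models\theta$ — an invariant measure makes $b\mapsto\mu(\phi(x,b))$ constant on each $\varnothing$-conjugacy class, but $\theta$ may meet unboundedly many such classes, and no compactness argument gives a uniform $\epsilon$. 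You flagged this yourself as "the main obstacle"; it is indeed the missing step, and patching it looks about as hard as what Fact 2.4 is being used for downstream (Theorem 3.1). The moral is that the $(p,q)$-property should be obtained directly from indiscernibility and Lascar invariance of non-forking extensions, not routed through a measure. Secondary issues — measurability of $r\mapsto\delta_{p_r}(\phi(x,b))$, and the observation that any Lascar-invariant measure is already non-forking so the support condition is automatic — are real but minor by comparison.
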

\begin{proof} Let $X$ denote the closed subset of $S_x(\mathfrak{C})$ consisting of those types which do not fork. Give $b\models\pi$, let $F_b=\{p\in X:p\vdash\phi(x,b)\}$, and let $\mathcal{F}$ be the family $\{F_p:b\models\pi\}$. Since $T$ is NIP, $\mathcal{F}$ has finite VC-dimension, and hence finite VC-codimension – say strictly smaller than $q\in\omega$. We claim that $\mathcal{F}$ has the $(p,q)$-property for some $p\in\omega$. Otherwise, for each $p\in\omega$, there is a sequence $b_1,\dots,b_p$ of realizations of $\pi$ such that, for any $s\subseteq[p]$ of size $q$, we have $\bigcap_{i\in s}F_{b_i}=\varnothing$, so that the formula $\bigwedge_{i\in s}\phi(x,b_i)$ forks. 

Now, since $\varnothing$ is an extension base, the set of $(c_1,\dots,c_q)$ such that $\bigwedge_{i\in[q]}\phi(x,c_i)$ forks is type-definable. Thus, by compactness, we may find an infinite sequence $(b_i)_{i\in\omega}$ of realizations of $\pi(y)$ such that $\bigwedge_{i\in s}\phi(x,b_i)$ forks for every $s\subset\omega$ of size $q$, and then again by compactness and by Ramsey's theorem we may assume $(b_i)_{i\in\omega}$ is indiscernible. But this is a contradiction; letting $q(x)$ be any global non-forking type concentrated on $\phi(x,b_0)$, then by Lascar-invariance $q$ is concentrated on $\phi(x,b_i)$ for each $i\in\omega$, so that $\bigwedge_{i\in[q]}\phi(x,b_i)$ does not fork.

So indeed $\mathcal{F}$ has the $(p,q)$-property for some $p\in\omega$. By the $(p,q)$-theorem, there is thus some $n\in\omega$ such that, for every finite subset $\mathcal{F}_0\subseteq\mathcal{F}$, there is an $n$-element subset of $X$ which meets every element of $\mathcal{F}_0$. Hence, by compactness, the partial type in variables $x_1,\dots,x_n$ containing the following formulas is consistent: (i) $\bigvee_{i\in[n]}\phi(x_i,b)$ for every $b\models \pi$, and (ii) $\neg\psi(x_i,c)$ for every $i\in[n]$ and every $\psi(x_i,c)\in L(\mathfrak{C})$ which forks. Taking any global completion and letting $p_i$ be its restriction to the variable $x_i$, the desired result follows.
\end{proof}

\subsection{Dynamics}
Let $G$ be a (discrete) group. Recall that a `$G$-flow' is a non-empty compact Hausdorff space $X$ together with a left action of $G$ on $X$ by homeomorphisms. A `subflow' of $X$ is a closed $G$-invariant subset of $X$, ie a non-empty closed subset $Y\subseteq X$ with $gY=Y$ for all $g\in G$. The flow $X$ is `point-transitive' if, for some $p\in X$, the orbit $Gp$ is a dense subset of $X$. An open subset $U\subseteq X$ is called `generic' if $X=g_1U\cup\dots\cup g_nU$ for some $g_1,\dots,g_n\in G$, and is called `weakly generic' if there is a non-generic open set $V$ such that $U\cup V$ is generic. A point $p\in X$ is called (weakly) generic if every one of its open neighborhoods is (weakly) generic. Finally, a point $p\in X$ is said to be `almost periodic' if it lies in a minimal subflow of $X$. The following is proved in \citep{newelski}:
\begin{fact}
    Let $X$ be a point-transitive $G$-flow. Then every almost periodic point of $X$ is weakly generic.
\end{fact}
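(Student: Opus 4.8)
The plan is to prove the statement directly, by showing that every open neighborhood of an almost periodic point is weakly generic. So fix an almost periodic point $p$, a minimal subflow $M$ with $p\in M$, and an arbitrary open neighborhood $U$ of $p$; the goal is to produce a non-generic open set $V$ such that $U\cup V$ is generic.

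First I would use minimality of $M$ to cover it by finitely many translates of $U$. Since $p\in U\cap M$, the set $U\cap M$ is a nonempty relatively open subset of $M$; as $M$ is minimal, every orbit $Gy$ with $y\in M$ is dense in $M$, hence meets $U$, so that $y\in gU$ for some $g\in G$. Thus $\{gU:g\in G\}$ is an open cover of $M$, and, as $M$ is a closed subset of the compact space $X$, there is a finite subcover: $M\subseteq g_1U\cup\dots\cup g_nU=:W$.

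Next I would set $V:=X\setminus M$, which is open since $M$ is closed, and note that it is non-generic: because $M$ is a subflow, $gM=M$ and hence $gV=V$ for every $g\in G$, so any finite union of $G$-translates of $V$ is just $V=X\setminus M\neq X$ (using $M\neq\varnothing$); this also covers the degenerate case $M=X$, in which $V=\varnothing$ is trivially open and non-generic. I would then check that $U\cup V$ is generic: since $g_iV=V$ and $g_iU\subseteq g_i(U\cup V)$, we have $g_1(U\cup V)\cup\dots\cup g_n(U\cup V)\supseteq W\cup V\supseteq M\cup(X\setminus M)=X$. So $U$ is weakly generic, witnessed by $V$; as $U$ was an arbitrary neighborhood of $p$, the point $p$ is weakly generic.

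The whole argument rests on a single observation — to take $V=X\setminus M$ — after which the two verifications are routine: $V$ is translation-invariant, so automatically non-generic, while its complement $M$, being minimal, is finitely covered by translates of any neighborhood of any of its points. The only place some care is needed is in the second paragraph, where minimality of $M$ — not merely the equality $M=\overline{Gp}$ — is what lets us conclude that orbits of \emph{all} points of $M$ meet $U$. (Point-transitivity of $X$ is in fact not used in this proof; it is simply the setting in which the fact is applied.)
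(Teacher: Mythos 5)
Your proof is correct, and the key move---taking $V = X\setminus M$, where $M$ is a minimal subflow through $p$---is exactly the right one. The verification is sound: $M$ minimal and compact gives a finite cover $M\subseteq g_1U\cup\dots\cup g_nU$; $V$ is open and $G$-invariant, hence non-generic (as $M\neq\varnothing$); and $\bigcup_i g_i(U\cup V)=\bigl(\bigcup_i g_iU\bigr)\cup V\supseteq M\cup(X\setminus M)=X$. The degenerate case $M=X$ is handled correctly, and your parenthetical observation that point-transitivity of $X$ is never actually invoked is accurate --- the statement holds for arbitrary $G$-flows. Note that the paper itself does not prove this statement; it is quoted as a fact from Newelski's paper, so there is no in-paper proof to compare against. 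Your argument is the standard one for this result and there is nothing to fault in it.
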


\section{Amenable Case}
Recall from \citep{hrushovski_krupinski_pillay_1} that a theory $T$ is said to be `first-order amenable', or just `amenable', if every type over $\varnothing$ extends to a global automorphism-invariant Keisler measure. In any theory $T$, amenability implies that $\varnothing$ is an extension base, and, using Fact 2.2, it is observed in \citep{hrushovski_krupinski_pillay_1} that the converse holds when $T$ is NIP. We now observe that, if $T$ is NIP and first-order amenable, then having measure $0$ under any automorphism-invariant Keisler measure implies forking. This is the `automorphism group analogue' of Corollary 3.34 in \citep{chernikov_simon_groups} and is proved in an analogous way, with the proof a quick corollary of Fact 2.2 and Fact 2.4.
\begin{theorem}
Suppose that $T$ is NIP, that $\varnothing$ is an extension base, and that $\phi(x,b)$ is a non-forking $L(\mathfrak{C})$-formula. Then there is a global automorphism-invariant Keisler measure giving $\phi(x,b)$ positive measure.
\end{theorem}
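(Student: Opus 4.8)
The plan is to build the measure by a pigeonhole argument over the finitely many non-forking global types handed to us by Fact 2.4, using the automorphism-invariant Keisler measures attached to those types via the Haar-measure construction of Fact 2.2.

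Concretely, I would first apply Fact 2.4 with $\pi(y):=\tp(b/\varnothing)$. Since forking over $\varnothing$ is $\Aut(\mathfrak{C})$-invariant and $\phi(x,b)$ does not fork, the formula $\phi(x,b')$ does not fork for every $b'\models\pi$, so Fact 2.4 (using that $\varnothing$ is an extension base) yields non-forking global types $p_1,\dots,p_n\in S_x(\mathfrak{C})$ such that every $b'\models\pi$ has $p_i\vdash\phi(x,b')$ for some $i$. By Fact 2.2, each $p_i$ gives a global automorphism-invariant Keisler measure $\mu_{p_i}$ on $\mathfrak{C}^x$ with $\mu_{p_i}(\phi(x,b))=\eta(S_{p_i,\phi(x,b)})$, where each $S_{p_i,\phi(x,b)}$ is a Borel subset of $\mathrm{Gal}_{\mathrm{KP}}(T)$.

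The key step is to check that $\bigcup_{i=1}^{n} S_{p_i,\phi(x,b)}=\mathrm{Gal}_{\mathrm{KP}}(T)$. Given $\sigma\in\Aut(\mathfrak{C})$, the tuple $\sigma^{-1}(b)$ realizes $\tp(b/\varnothing)=\pi$, so by the choice of the $p_i$ there is some $i$ with $p_i\vdash\phi(x,\sigma^{-1}(b))$, equivalently $\sigma p_i\vdash\phi(x,b)$, equivalently $\bar\sigma\in S_{p_i,\phi(x,b)}$; and every element of $\mathrm{Gal}_{\mathrm{KP}}(T)$ is of the form $\bar\sigma$. Now $\eta$ is a probability measure on $\mathrm{Gal}_{\mathrm{KP}}(T)$ and the $S_{p_i,\phi(x,b)}$ are finitely many Borel sets covering it, so some $S_{p_j,\phi(x,b)}$ has positive $\eta$-measure; then $\mu_{p_j}$ is the required measure. (Equivalently one may take $\frac1n\sum_{i}\mu_{p_i}$, which is again an automorphism-invariant Keisler measure and gives $\phi(x,b)$ positive measure.)

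I do not anticipate a genuine obstacle here: essentially all the content sits in Fact 2.4 — whose proof in turn draws on the $(p,q)$-theorem and the type-definability of forking over an extension base — and in the Haar-measure machinery of Fact 2.2. The only things needing a moment's care are bookkeeping: that $S_{p,\phi(x,b)}$ really is well-defined on $\mathrm{Gal}_{\mathrm{KP}}(T)$, which comes from Fact 2.1 (non-forking global types are KP-invariant) together with the fact that $\Aut(\mathfrak{C})$-translates of non-forking types are again non-forking; and that $\sigma^{-1}(b)$ indeed realizes $\tp(b/\varnothing)$ in the covering step. An alternative, more dynamical argument is available via Fact 2.5, but the route sketched above is the most direct.
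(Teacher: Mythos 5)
Your argument is essentially identical to the paper's: apply Fact 2.4 to $\tp(b/\varnothing)$ to obtain non-forking types $p_1,\dots,p_n$ covering all conjugates of $\phi(x,b)$, observe that the Borel sets $S_{p_i,\phi(x,b)}$ then cover $\mathrm{Gal}_{\mathrm{KP}}(T)$, and conclude by pigeonhole that some $\mu_{p_i}$ gives $\phi(x,b)$ positive measure. The only difference is that you spell out the covering verification in a bit more detail; the approach and its ingredients are the same.
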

\begin{proof}
    By Fact 2.4 applied to the type $\pi(x):=\tp(b/\varnothing)$, there are non-forking types $p_1,\dots,p_n\in S_x(\mathfrak{C})$ such that, for every $b'\equiv b$, the formula $\phi(x,b')$ is contained in some $p_i$. In particular, the sets $S_{p_1,\phi(x,b)},\dots,S_{p_n,\phi(x,b)}$ defined in Section 2.2 cover $\mathrm{Gal}_{\mathrm{KP}}(T)$. Hence, for some $i\in[n]$, $S_{p_i,\phi(x,b)}$ has Haar measure at least $1/n$, and then the measure $\mu_{p_i}$ from Fact 2.2 is automorphism-invariant and gives $\phi(x,b)$ measure at least $1/n$.
\end{proof}

\subsection{Connection with Dynamics}
Corollary 3.34 in \citep{chernikov_simon_groups} is deduced using a `definable group analogue' of Fact 2.4,\footnote[1]{The analogue in this case is that, in a definably amenable NIP group, for any f-generic formula $\phi(x,b)$ there are finitely many global f-generic types $p_1,\dots,p_n$ such that every translate of $\phi(x,b)$ is contained in some $p_i$.} which is proved using topological dynamics in Section 3.3 of \citep{chernikov_simon_groups}. We observe in this section that an analogous argument also gives a topological dynamics proof of Fact 2.4.

Let $x$ be a tuple of variables. Then $S_x(\mathfrak{C})$ is naturally a flow for the automorphism group $G=\mathrm{Aut}(\mathfrak{C})$, and a subset $Y\subseteq S_x(\mathfrak{C})$ is a subflow if and only if it is of form $S_{\pi(x)}(\mathfrak{C})$, where $\pi(x)$ is an automorphism-invariant partial type over $\mathfrak{C}$ and $S_{\pi(x)}(\mathfrak{C})$ is the set of complete $x$-types over $\mathfrak{C}$ extending $\pi(x)$. Fix such a $\varnothing$-invariant partial type $\pi(x)$.

Let us say that an $L(\mathfrak{C})$-formula $\phi(x,b)$ is (weakly) generic for $\pi(x)$ if the clopen subset of $S_{\pi(x)}(\mathfrak{C})$ corresponding to $\phi(x,b)$ is (weakly) generic; then $\phi(x,b)$ is generic for $\pi(x)$ if and only if $\pi(x)\vdash \bigvee_{i\in[n]}\phi(x,b_i)$ for some finitely many $b_1,\dots,b_n$ with $b_i\equiv b$ for each $i\in[n]$, and, by compactness in $S_{\pi(x)}(\mathfrak{C})$, $\phi(x,b)$ is weakly generic for $\pi(x)$ if and only if there is an $L(\mathfrak{C})$-formula $\psi(x,c)$ not generic for $\pi(x)$ such that $\phi(x,b)\vee\psi(x,c)$ is generic for $\pi(x)$.

Now the following lemmas correspond respectively to Proposition 3.33 and Proposition 3.30 in \citep{chernikov_simon_groups}:

\begin{lemma}
    Suppose that $T$ is NIP and that $\phi(x,b)$ is a non-dividing $L(\mathfrak{C})$-formula. Then there are finitely many almost periodic types $p_1,\dots,p_n\in X=S_{x}(\mathfrak{C})$ such that, for any $b'\equiv b$, there is $i\in[n]$ with $p_i\vdash\phi(x,b')$.
\end{lemma}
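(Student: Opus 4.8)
The plan is to adapt the topological–dynamical argument behind Proposition 3.33 of \citep{chernikov_simon_groups}: first show that the clopen set $[\phi(x,b)]$ is weakly generic in a suitable point-transitive subflow of $S_x(\mathfrak{C})$, then feed this into Newelski's Fact 2.5 to locate a minimal subflow all of whose types contain a conjugate of $\phi(x,b)$, and finally apply the $(p,q)$-theorem to cut that subflow down to finitely many types. Since Fact 2.5 needs point-transitivity and $S_x(\mathfrak{C})$ itself need not be point-transitive, I would first pass to a point-transitive subflow. Using that $\phi(x,b)$ does not divide, fix a $\varnothing$-indiscernible sequence $(b_i)_{i<\omega}$ with $b_0\equiv b$ and a realization $a^\ast$ of $\{\phi(x,b_i):i<\omega\}$, and set $q_0:=\tp(a^\ast/\varnothing)$ and $Y:=S_{q_0}(\mathfrak{C})$. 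As $q_0$ is over $\varnothing$, $Y$ is an $\Aut(\mathfrak{C})$-invariant closed subset of $S_x(\mathfrak{C})$, hence a subflow; it is point-transitive (the orbit of any global extension of $q_0$ finitely satisfiable in a small model is dense, by strong homogeneity of $\mathfrak{C}$); it meets $[\phi(x,b')]$ for every $b'\equiv b$ (conjugate $a^\ast$); and any minimal subflow of $Y$ is a minimal subflow of $S_x(\mathfrak{C})$, so it suffices to produce the $p_i$ inside $Y$.

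Step 1: show that $[\phi(x,b)]\cap Y$ is weakly generic in $Y$. This is the heart of the argument, and the place where NIP is genuinely used, by a variant of the proof of Fact 2.4. Arguing contrapositively, if $[\phi(x,b)]\cap Y$ is not weakly generic, then $[\neg\phi(x,b)]\cap Y$ must be generic in $Y$ — otherwise it would witness that $[\phi(x,b)]\cap Y$ is weakly generic — and, since the non–weakly-generic clopen subsets of $Y$ form an ideal, the same holds with $\phi(x,b)$ replaced by any finite disjunction of conjugates. Combining this with the finite VC-codimension of the family $\{[\phi(x,b')]\cap Y:b'\equiv b\}$ furnished by NIP, and running the $(p,q)$-style compactness-and-Ramsey argument from the proof of Fact 2.4, one should produce a $\varnothing$-indiscernible sequence $(b_i')_{i<\omega}$ with $b_0'\equiv b$ for which $q_0(x)\cup\{\phi(x,b_i'):i<\omega\}$ is inconsistent; by the choice of $q_0$ (realized by $a^\ast\models\{\phi(x,b_i):i<\omega\}$) this contradicts non-dividing of $\phi(x,b)$. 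The delicate point is that, without an extension base, Fact 2.3 is unavailable, so the set of ``bad'' configurations is not obviously type-definable over $\varnothing$; handling this — either by building the indiscernible sequence before passing to limits, or by working relative to $q_0$ — is where the real care is needed, and this is the main obstacle of the whole proof.

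Step 2 is then topological dynamics. Because $Y$ is point-transitive, Fact 2.5 gives that every almost periodic point of $Y$ is weakly generic, so the weakly generic points of $Y$ form a closed set containing every minimal subflow. An elementary argument — the non–weakly-generic open sets are closed under finite unions and under passing to subsets, so a weakly generic clopen set cannot be covered by finitely many non–weakly-generic opens — shows that the weakly generic clopen set $[\phi(x,b)]\cap Y$ meets some minimal subflow $\mathcal{M}$ of $Y$. Since $\mathcal{M}$ is $\Aut(\mathfrak{C})$-invariant and meets $[\phi(x,b)]$, it meets $[\phi(x,b')]$ for every $b'\equiv b$; equivalently $\mathcal{M}\subseteq\bigcup_{b'\equiv b}[\phi(x,b')]$, i.e.\ every type in $\mathcal{M}$ contains some conjugate of $\phi(x,b)$.

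Step 3: finitize. The family $\mathcal{G}:=\{[\phi(x,b')]\cap\mathcal{M}:b'\equiv b\}$ consists of nonempty clopen subsets of $\mathcal{M}$ and has finite VC-dimension by NIP; I would argue it has the $(p,q)$-property, again by the Fact 2.4–type argument with ``$\bigwedge_{i\in s}\phi(x,b_i')$ is inconsistent with the partial type defining $\mathcal{M}$'' in place of ``forks'' — the containment $\mathcal{M}\subseteq\bigcup_{b'}[\phi(x,b')]$ together with the $\Aut(\mathfrak{C})$-invariance of $\mathcal{M}$ providing the needed consistency (alternatively, one may get a uniform lower bound from an $\Aut(\mathfrak{C})$-invariant probability measure on $\mathcal{M}$ supplied by Fact 2.2, via a non-forking type inside $\mathcal{M}$). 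By the $(p,q)$-theorem there is a finite set $\{p_1,\dots,p_n\}\subseteq\mathcal{M}$ meeting every member of $\mathcal{G}$; since $\mathcal{M}$ is minimal, each $p_i$ is almost periodic, and by construction for every $b'\equiv b$ some $p_i$ lies in $[\phi(x,b')]$, i.e.\ $p_i\vdash\phi(x,b')$, as required. As indicated above, I expect Step 1 to be the genuine difficulty; Steps 2 and 3 are largely soft once weak genericity is in hand.
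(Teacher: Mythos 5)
Your strategy — pass to a point‑transitive subflow, show $[\phi(x,b)]$ is weakly generic there, invoke Newelski's Fact 2.5, and finitize with the $(p,q)$-theorem — is genuinely different from the paper's and, as you yourself flag, has a real gap at exactly the place you identify. The paper's proof of this lemma does not use weak genericity or Fact 2.5 at all (those appear later, in the deduction of Fact 2.4 from Lemmas 3.2 and 3.3). Instead it reverses your Steps 1--3: it first runs the $(p,q)$-argument over $\varnothing$, and only then arranges almost periodicity.

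Concretely, the paper shows directly that $\mathcal{F}=\{F_{b'}:b'\equiv b\}\subseteq S_x(\mathfrak{C})$ has the $(p,q)$-property: if not, one gets for each $p$ a tuple $b_1,\dots,b_p$ of conjugates of $b$ with $\bigwedge_{i\in s}\phi(x,b_i)$ \emph{inconsistent} for every $q$-subset $s$. The crucial point, which your route loses, is that ``$\bigwedge_{i\in s}\phi(x,c_i)$ is inconsistent'' is a \emph{definable} (indeed, a single-formula) condition on $\bar c$ over $\varnothing$; so Ramsey plus ordinary compactness produces an indiscernible sequence witnessing dividing of $\phi(x,b)$, contradiction, with no extension base and no Fact 2.3 needed. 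By contrast, your Step 1 would need to run the same argument with ``inconsistent with $q_0$'' (or, in Step 3, ``inconsistent with $\pi_{\mathcal{M}}$''), which is an $\bigvee$-type-definable, not type-definable, condition on $\bar c$ — that is precisely the obstacle you flagged, and I don't see how to get around it along your lines without essentially reproving the lemma another way. Your Step 3 has the same problem: $\pi_{\mathcal{M}}$ lives over $\mathfrak{C}$ and is only $\Aut(\mathfrak{C})$-invariant, not $\varnothing$-type-definable, so the compactness-and-Ramsey step to build an infinite bad sequence doesn't go through; and the suggested fallback of a measure via Fact 2.2 presupposes that $\mathcal{M}$ contains a non-forking type, which is exactly what Fact 2.4 is ultimately supposed to establish and so would be circular.

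Once the $(p,q)$-theorem has produced $p_1,\dots,p_n\in S_x(\mathfrak{C})$ such that every conjugate of $\phi(x,b)$ lies in some $p_i$, the paper upgrades these to almost periodic types without any appeal to genericity: pass to the product flow $X^n$ with the diagonal action, let $Y$ be the orbit closure of $(p_1,\dots,p_n)$, and note that the covering property ``for all $b'\equiv b$, some coordinate implies $\phi(x,b')$'' is preserved on all of $Y$ — its failure at a point is witnessed by a basic clopen neighbourhood disjoint from the dense orbit. A minimal subflow $Z\subseteq Y$ then projects onto minimal subflows of $X$ in each coordinate, so any tuple in $Z$ gives almost periodic $q_1,\dots,q_n$ with the required covering property. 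This product-flow trick is the step your proposal is missing; it is elementary (Zorn plus a clopen-basis observation) and is exactly what lets the paper get almost periodicity for free after the $(p,q)$-argument, rather than trying to establish weak genericity first.
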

\begin{proof}
    Given $b'\equiv b$, let $F_{b'}=\{p\in X:p\vdash\phi(x,b')\}$. Since $T$ is NIP, the family of subsets $\mathcal{F}=\{F_{b'}:b'\equiv b\}$ has finite VC-codimension, say strictly smaller than $q\in\omega$. Now $\mathcal{F}$ has the $(p,q)$-property for some $p\in\omega$; otherwise we can find arbitrary long sequences $b_1,\dots,b_p$ of conjugates of $b$ such that, for any $s\subseteq[p]$ of size $q$, we have $\bigcap_{i\in s}F_{b_i}=\varnothing$, so that $\bigwedge_{i\in s}\phi(x,b_i)$ is inconsistent. But then by Ramsey and compactness we may find an indiscernible sequence $(b_i)_{i\in\omega}$ of conjugates of $b$ such that $\bigwedge_{i\in s}\phi(x,b_i)$ is inconsistent for every $s\subset\omega$ of size $q$, contradicting that $\phi(x,b)$ does not divide. So indeed $\mathcal{F}$ has the $(p,q)$-property for some $p\in\omega$, and so by compactness and the $(p,q)$-theorem we may find finitely many $p_1,\dots,p_n\in X$ such that, for every $b'\equiv b$, there is some $i\in[n]$ with $p_i\vdash\phi(x,b')$.
    
    Now consider the flow $X^n$, equipped with the product topology and on which $G$ acts diagonally. Let $Y$ be the closure in $X^n$ of the orbit of $(p_1,\dots,p_n)$. We claim that, for any $(q_1,\dots,q_n)\in Y$, and any $b'\equiv b$, there is some $i\in[n]$ with $q_i\vdash\phi(x,b')$. To see this, suppose otherwise. Let $O$ be the clopen subset of $X$ corresponding to the formula $\neg\phi(x,b')$; then $O^n$ is an open subset of $Y$ containing $(q_1,\dots,q_n)$. But $O^n$ is also disjoint from the orbit of $(p_1,\dots,p_n)$; indeed, if $\sigma$ is an automorphism, then some $p_i$ contains $\phi(x,\sigma^{-1}(b'))$, and so some $\sigma p_i$ contains $\phi(x,b')$. So $(q_1,\dots,q_n)$ lying in $O^n$ contradicts the definition of $Y$.
    
    By Zorn's lemma, let $Z$ be a minimal subflow of $Y$. Then the image of $Z$ under any of the projection maps $X^n\to X$ is a minimal subflow of $X$. In particular, picking any $(q_1,\dots,q_n)\in Z$, then the types $q_1,\dots,q_n$ have the desired properties.
\end{proof}

\begin{lemma}
    Suppose that forking and dividing coincide, and that $\pi(x)$ is an automorphism-invariant partial type over $\mathfrak{C}$. Then a weakly generic formula for $\pi(x)$ does not fork.
\end{lemma}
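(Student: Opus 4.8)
The plan is to prove the contrapositive. Suppose $\phi(x,b)$ forks over $\varnothing$ (equivalently, by hypothesis, divides); I will show it is not weakly generic for $\pi(x)$. Assume otherwise. Unwinding the definitions of weak genericity and genericity, there is an $L(\mathfrak{C})$-formula $\psi(x,c)$ which is not generic for $\pi(x)$, together with pairs $(b_i,c_i)\equiv(b,c)$ for $i\in[n]$, such that
\[\pi(x)\vdash\bigvee_{i\in[n]}\bigl(\phi(x,b_i)\vee\psi(x,c_i)\bigr).\]

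The key idea is to collapse the two families of disjuncts into single formulas with compound parameters. Put $\Theta(x,\bar b):=\bigvee_{i\in[n]}\phi(x,b_i)$ and $\Psi(x,\bar c):=\bigvee_{i\in[n]}\psi(x,c_i)$, where $\bar b=(b_1,\dots,b_n)$ and $\bar c=(c_1,\dots,c_n)$, so that $\pi(x)\vdash\Theta(x,\bar b)\vee\Psi(x,\bar c)$. First I would check that $\Psi(x,\bar c)$ is not generic for $\pi(x)$: otherwise a finite disjunction of conjugates of $\Psi(x,\bar c)$ would be implied by $\pi(x)$, and expanding each conjugate (each a disjunction of conjugates of $\psi(x,c)$) would exhibit $\psi(x,c)$ itself as generic for $\pi(x)$, a contradiction. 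On the other hand, each $\phi(x,b_i)$ forks over $\varnothing$, being a conjugate of $\phi(x,b)$, hence divides over $\varnothing$ by hypothesis; thus $\Theta(x,\bar b)$ is literally a finite disjunction of formulas dividing over $\varnothing$, so it forks over $\varnothing$, and therefore \emph{divides} over $\varnothing$, again using that forking equals dividing. This is exactly where the hypothesis is essential, and it is what the bundling buys: trying instead to eliminate the disjuncts $\phi(x,b_i)$ one at a time using separate dividing sequences fails, because each elimination reintroduces the surviving disjuncts with multiplicity and the process does not terminate — collapsing everything into the single formula $\Theta(x,\bar b)$ sidesteps this.

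The remainder is the usual inconsistency count against an invariant partial type. Since $\Theta(x,\bar b)$ divides over $\varnothing$, fix (after extracting an indiscernible sequence in the standard way) a $\varnothing$-indiscernible sequence $(\bar b^{\,t})_{t<\omega}$ with $\bar b^{\,0}=\bar b$ such that $\{\Theta(x,\bar b^{\,t}):t<\omega\}$ is $k$-inconsistent for some $k<\omega$. For each $t$ pick $\rho_t\in\Aut(\mathfrak{C})$ with $\rho_t(\bar b)=\bar b^{\,t}$ and set $\bar c^{\,t}:=\rho_t(\bar c)$; since $\pi(x)$ is $\Aut(\mathfrak{C})$-invariant, applying $\rho_t$ yields $\pi(x)\vdash\Theta(x,\bar b^{\,t})\vee\Psi(x,\bar c^{\,t})$ for every $t$. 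Now take any $a\models\pi$: by $k$-inconsistency, $a$ satisfies $\Theta(a,\bar b^{\,t})$ for fewer than $k$ values of $t$, hence satisfies $\Psi(a,\bar c^{\,t})$ for all but fewer than $k$ values of $t$, and in particular for some $t<k$. As $a$ was arbitrary, $\pi(x)\vdash\bigvee_{t<k}\Psi(x,\bar c^{\,t})$, and each $\bar c^{\,t}\equiv\bar c$; so $\Psi(x,\bar c)$ is generic for $\pi(x)$, contradicting what was shown above.

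I expect the only genuinely delicate point to be the one flagged in the second paragraph — recognizing that one should pass to the single formula $\Theta(x,\bar b)$ and invoke ``forking $=$ dividing'' for \emph{it}, rather than working disjunct by disjunct. The extraction of a $k$-inconsistent indiscernible witness of dividing and the final count against $\pi$ are routine.
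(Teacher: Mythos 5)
Your proof is correct and follows essentially the same route as the paper's: bundle the disjuncts into $\Theta(x,\bar b)=\bigvee_i\phi(x,b_i)$ and $\Psi(x,\bar c)=\bigvee_i\psi(x,c_i)$, apply ``forking $=$ dividing'' to the single formula $\Theta(x,\bar b)$, extract a $k$-inconsistent indiscernible witness, transport the $\bar c$-parameters along it by conjugacy, and count against the invariant $\pi(x)$. The only cosmetic difference is that the paper reads off genericity of $\psi(x,c)$ directly from the fact that every $c_{ki}\equiv c$, whereas you route through the (equally valid) observation that $\Psi(x,\bar c)$ is not generic.
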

\begin{proof}
    Suppose that $\phi(x,b)$ forks and that $\phi(x,b)\vee\psi(x,c)$ is generic for $\pi(x)$; we will show that $\psi(x,c)$ is generic for $\pi(x)$, which will give the desired result. Since $\phi(x,b)\vee\psi(x,c)$ is generic for $\pi(x)$, there are $(b_1,c_1),\dots,(b_n,c_n)$ with $(b_i,c_i)\equiv (b,c)$ for each $i\in[n]$ and $\pi(x)\vdash\bigvee_{i\in[n]}(\phi(x,b_i)\vee\psi(x,c_i))$. In other words, we have $\pi(x)\vdash \zeta(x,\bar{b})\vee\xi(x,\bar{c})$, where \begin{align*}
    \zeta(x,y_1,\dots,y_n)&\equiv\bigvee_{i\in[n]}\phi(x,y_i)\\
    \xi(x,z_1,\dots,z_n)&\equiv\bigvee_{i\in[n]}\psi(x,z_i)
    \end{align*} and $\bar{b}=(b_1,\dots,b_n),\bar{c}=(c_1,\dots,c_n)$. Now, the formulas which fork form an ideal, and $\phi(x,b_i)$ forks for each $i$, so $\zeta(x,\bar{b})$ forks as well. Since forking and dividing coincide, there is thus an indiscernible sequence $(\bar{b}_k=(b_{k1},\dots,b_{kn}))_{k\in\omega}$ such that $\bar{b}_0=\bar{b}$ and such that the family of formulas $(\zeta(x,\bar{b}_k):k\in\omega)$ is inconsistent.

    Since $\bar{b}\equiv\bar{b}_k$ for each $k$, we may find $(\bar{c}_k:k\in\omega)$ such that $(\bar{b}_k,\bar{c}_k)\equiv (\bar{b},\bar{c})$ for each $k$. In particular, $\pi(x)\vdash\zeta(x,\bar{b}_k)\vee\xi(x,\bar{c}_k)$ for each $k$. On the other hand, for some $q\in\omega$, we have $\bigwedge_{k\in[q]}\zeta(x,\bar{b}_k)$ inconsistent, whence $\pi(x)\vdash\bigvee_{k\in[q]}\xi(x,\bar{c}_k)$, ie $\pi(x)\vdash\bigvee_{k\in[q]}\bigvee_{i\in[n]}\psi(x,c_{ki}).$ Since $\bar{c}\equiv\bar{c}_k$ for each $k\in\omega$, it follows that $c_{ki}\equiv c$ for each $k,i\in\omega$, and hence indeed $\psi(x,c)$ is generic for $\pi(x)$.
\end{proof}

Now Fact 2.4 follows from Lemma 3.2 and Lemma 3.3; indeed, suppose $T$ is NIP and that $\varnothing$ is an extension base, and let $\phi(x,b)$ be any non-forking formula. By Lemma 3.2, there are almost periodic types $p_1,\dots,p_n\in S_{x}(\mathfrak{C})$ such that any conjugate of $\phi(x,b)$ lies in some $p_i$, so it suffices to show that $p_i$ is non-forking for each $i\in[n]$.

To see this, let $Y_i$ be the (unique) minimal subflow of $S_x(\mathfrak{C})$ containing $p_i$, and let $\pi_i(x)$ be an automorphism-invariant partial type over $\mathfrak{C}$ such that $Y_i=S_{\pi_i(x)}(\mathfrak{C})$. $Y_i$ is a minimal flow, so by Fact 2.5 any point of $Y_i$ is weakly generic in $Y_i$, and in particular $p_i$ is. But $\varnothing$ is an extension base, so forking and dividing coincide by Fact 2.3, and now the result follows from Lemma 3.3.

\section{Counterexample}
In this section we show that Theorem 3.1 can fail if the amenability hypothesis is dropped. First, in Section 4.1, we give a general construction, which obtains from a structure $M$ a new structure $M_{\mathrm{cyc}}$ in which algebraic formulas from $M$ give rise to forking formulas. Then in Section 4.2 we make some basic observations about the model theory of the binary tree $M=(2^{<\omega},\leqslant)$, and then in Section 4.3 we show that the structure $M_{\mathrm{cyc}}$ associated to it has the desired behavior.
\subsection{General Construction}
Let $L$ be a first-order language, and assume for notational convenience that it is one-sorted. We define a new language $L_{\mathrm{cyc}}$, which has two sorts $E$ and $B$, a function symbol $\pi:E\to B$, a ternary relation symbol $\mathrm{cyc}$ on the sort $E$, and all the symbols of $L$, taken as symbols on the sort $B$. Now, given an $L$-structure $M$, we define a corresponding $L_{\mathrm{cyc}}$-structure $M_{\mathrm{cyc}}$ by taking $E(M_{\mathrm{cyc}})=S^1\times M$ and $B(M_{\mathrm{cyc}})=M$, taking $\pi:S^1\times M\to M$ to be the projection map, and taking $\mathrm{cyc}$ to be the circular order on each fiber of $\pi$ and to induce no other relations. (So that $\mathrm{cyc}(x,y,z)\to \pi(x)=\pi(y)=\pi(z)$.)

Note that $M_{\mathrm{cyc}}$ is interpretable in the two-sorted structure $(S^1,M)$; in particular, if $M$ is NIP, then $M_{\mathrm{cyc}}$ will be too. Now, let $\mathfrak{C}$ be a saturated model of $\mathrm{Th}(M_{\mathrm{cyc}})$. By uniqueness of saturated models, $\mathfrak{C}$ can be identified with the projection $\bar{S}^1\times\bar{M}\to\bar{M}$, where $\bar{S}^1,\bar{M}$ are saturated models of the circular order and of $\mathrm{Th}(M)$, respectively. Note that the automorphisms of $\mathfrak{C}$ are in bijection with pairs $(\alpha,(f_e)_{e\in\bar{M}})$, where $\alpha$ is an automorphism of $\bar{M}$ and $f_e$ is an automorphism of $\bar{S}^1$ for each $e\in\bar{M}$; the associated automorphism of $\mathfrak{C}$ acts on the $E$-sort by taking $(u,e)$ to $(f_e(u),\alpha(e))$.

In the section we will focus on $L_{\mathrm{cyc}}(\bar{M})$-formulas of the form $\phi(\pi(x),b)$, where $\phi(z,b)$ is an $L(\bar{M})$-formula. We will show that $\phi(\pi(x),b)$ forks in $\mathfrak{C}$ if and only if $\phi(z,b)$ implies a disjunction of an algebraic $L$-formula and a forking $L(\bar{M})$-formula; first we give the backwards direction:

\begin{lemma}
    Let $\theta(z)$ be an algebraic $L$-formula and $\psi(z,d)$ an $L(\bar{M})$-formula that forks in $\bar{M}$; then the formula $\theta(\pi(x))\vee\psi(\pi(x),d)$ forks in $\mathfrak{C}$.
\end{lemma}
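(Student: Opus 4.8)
The plan is to show that each of the two disjuncts $\theta(\pi(x))$ and $\psi(\pi(x),d)$ forks in $\mathfrak{C}$; since the formulas that fork over $\varnothing$ form an ideal closed under finite disjunction, this suffices. For the first disjunct, note that $\theta$ being algebraic means $\theta(\mathfrak{C})$, computed in the sort $B$ which is just $\bar{M}$, is a finite set $\{e_1,\dots,e_k\}$ of elements algebraic over $\varnothing$, so $\theta(\pi(x))$ defines the union of the $k$ fibers $\pi^{-1}(e_j)$. For each $j$ pick two distinct points $c_1^j,c_2^j\in\pi^{-1}(e_j)$; since $\mathrm{cyc}$ restricts to a dense circular order on each fiber, the two closed $\mathrm{cyc}$-arcs between $c_1^j$ and $c_2^j$ cover $\pi^{-1}(e_j)$, so $\theta(\pi(x))$ implies the disjunction of these $2k$ arc-formulas. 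It then suffices to check that each arc-formula divides over $\varnothing$. The conjugates of the defining pair $(c_1^j,c_2^j)$ are exactly the pairs of distinct elements sharing a common fiber over some $e\equiv e_j$ — in particular, over $e_j$ itself, where they range over all pairs of distinct points of $\bar{S}^1$ — so it is enough to find, inside the single fiber $\pi^{-1}(e_j)$, a sequence of such pairs, indiscernible over $\varnothing$ in $\mathfrak{C}$, whose associated closed arcs are pairwise disjoint. In the pure circular order $\bar{S}^1$ choose points $a_0<b_0<a_1<b_1<\cdots$ accumulating in a fixed linear part of the circle, so that the closed arcs $[a_i,b_i]$ are pairwise disjoint and the sequence of pairs $(a_i,b_i)_i$ is indiscernible in $\bar{S}^1$; since the only structure on the sort $E$ is given by $\pi$ and $\mathrm{cyc}$, and $\mathrm{cyc}$ induces no relations beyond the fiberwise circular order, the sequence $\big(((a_i,e_j),(b_i,e_j))\big)_i$ is indiscernible over $\varnothing$ in $\mathfrak{C}$, and since the arcs are pairwise disjoint the corresponding formulas are $2$-inconsistent. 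Hence each arc-formula divides over $\varnothing$, and $\theta(\pi(x))$ forks.

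For the second disjunct, use that $\psi(z,d)$ forks in $\bar{M}$ to fix $L(\bar{M})$-formulas $\chi_1(z,d_1),\dots,\chi_m(z,d_m)$, each dividing over $\varnothing$ in $\bar{M}$, with $\bar{M}\models\forall z\,\big(\psi(z,d)\to\bigvee_{s}\chi_s(z,d_s)\big)$. The sort $B$ of $\mathfrak{C}$ with its induced structure is just the $L$-structure $\bar{M}$ (neither $\pi$ nor $\mathrm{cyc}$ adds structure to $B$ on its own), so this implication persists in $\mathfrak{C}$ and, composing with $\pi$, gives $\mathfrak{C}\models\forall x\,\big(\psi(\pi(x),d)\to\bigvee_s\chi_s(\pi(x),d_s)\big)$. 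Moreover, for each $s$ a $\varnothing$-indiscernible sequence in $\bar{M}$ witnessing that $\chi_s(z,d_s)$ divides remains $\varnothing$-indiscernible in $\mathfrak{C}$ for the same reason, and since $\pi$ maps $E(\mathfrak{C})$ onto $B(\mathfrak{C})=\bar{M}$, the $N$-inconsistency of the $\bar{M}$-instances of $\chi_s$ lifts to $N$-inconsistency of their composites with $\pi$; hence each $\chi_s(\pi(x),d_s)$ divides over $\varnothing$ in $\mathfrak{C}$. Thus $\psi(\pi(x),d)$ forks, and combining with the previous paragraph $\theta(\pi(x))\vee\psi(\pi(x),d)$ forks in $\mathfrak{C}$, as desired.

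I expect the handling of the $\theta$-part to be the main obstacle: the phenomenon that an algebraic formula downstairs yields a forking formula upstairs is exactly the point of the construction, and making it precise requires care in transferring the pairwise-disjoint-arcs indiscernible sequence from the pure circular order into $\mathfrak{C}$ over $\varnothing$ — in particular, knowing that $\theta(\mathfrak{C})$ is finite so that boundedly many arc-formulas cover $\theta(\pi(x))$, and using that the fibers carry no structure beyond the circular order so that the lifted sequence really is indiscernible over $\varnothing$. The $\psi$-part, by contrast, is a routine transfer along the surjective map $\pi$, exploiting that $B$ inherits no structure beyond its $L$-structure.
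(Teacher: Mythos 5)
Your proof is correct and follows essentially the same two-step strategy as the paper: reduce $\theta(\pi(x))$ to the finitely many fibers over $\theta(\mathfrak{C})$ and use that everything forks in the circular order on a fiber, then transfer the dividing witnesses for $\psi$ from $\bar{M}$ to $\mathfrak{C}$ along the surjection $\pi$. The only difference is that you unwind the assertion ``any formula in the circular order forks'' into an explicit indiscernible sequence of pairwise-disjoint arcs, which the paper takes as known; this is a welcome bit of extra detail but not a different route.
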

\begin{proof}
    It suffices to show that $\theta(\pi(x))$ and $\psi(\pi(x),d)$ each fork in $\mathfrak{C}$. To see that $\theta(\pi(x))$ forks in $\mathfrak{C}$, let $b_1,\dots,b_n$ be the realizations of $\theta(z)$; now $\theta(\pi(x))$ implies the disjunction $ \bigvee_{i\in[n]}(\pi(x)=b_i)$. But each formula $\pi(x)=b_i$ forks in $\mathfrak{C}$, since the induced structure on the fiber $\bar{S}^1\times\{b_i\}$ is just the circular order, and any formula in the circular order forks. So we need only show that $\psi(\pi(x),d)$ forks in $\mathfrak{C}$.
    
    By hypothesis, $\psi(z,d)$ implies a finite disjunction $\bigvee_{i\in[n]}\xi_i(z,e)$, where each $\xi_i(z,e)$ is an $L(\bar{M})$-formula that divides in $\bar{M}$, and so it suffices to show that each $\xi_i(\pi(x),e)$ divides in $\mathfrak{C}$. But if $(e_j)_{j\in\omega}$ is an indiscernible sequence from $\bar{M}$ with $e_0=e$ and $(\xi_i(z,e_j):j\in\omega)$ inconsistent, then $(\xi_i(\pi(x),e_j):j\in\omega)$ is also inconsistent, and $(e_j)_{j\in\omega}$ is still an indiscernible sequence in $\mathfrak{C}$, so that indeed $\xi_i(\pi(x),e)$ divides in $\mathfrak{C}$.
\end{proof}

In the rest of the section we will show the forwards direction of the claim above; first we need some auxiliary observations about types in $\mathfrak{C}$.

\begin{lemma}
    Let $(c_1,\dots,c_n)$ and $(c'_1,\dots,c'_n)$ be tuples from $E(\mathfrak{C})=\bar{S}^1\times\bar{M}$ and $a,a'$ elements of $E(\mathfrak{C})$ such that (i) $\pi(a)\neq\pi(c_i)$ and $\pi(a')\neq\pi(c'_i)$ for each $i$, (ii) $(\pi(a),\pi(\bar{c}))$ and $(\pi(a'),\pi(\bar{c}'))$ have the same type in $\bar{M}$, and (iii) $\bar{c}$ and $\bar{c}'$ have the same type in $\mathfrak{C}$. Then $(a,\bar{c})$ and $(a',\bar{c}')$ have the same type in $\mathfrak{C}$.
\end{lemma}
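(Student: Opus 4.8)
The plan is to produce an automorphism of $\mathfrak{C}$ carrying $(a,\bar c)$ to $(a',\bar c')$, exploiting the description of $\Aut(\mathfrak{C})$ from Section~4.1: its elements correspond to pairs $(\alpha,(f_e)_{e\in\bar M})$ with $\alpha\in\Aut(\bar M)$ and each $f_e\in\Aut(\bar S^1)$, acting on the $E$-sort by $(u,e)\mapsto(f_e(u),\alpha(e))$. Since $\mathfrak{C}$ is a monster model, two small tuples have the same type iff some such pair maps one to the other, so it suffices to exhibit the right pair.

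First I would use hypothesis (iii) to reduce to the case $\bar c=\bar c'$. Pick $\sigma_0=(\alpha_0,(g_e))\in\Aut(\mathfrak{C})$ with $\sigma_0(\bar c)=\bar c'$ and replace $(a,\bar c)$ by $(\sigma_0(a),\bar c')$, which has the same type; since $\alpha_0\in\Aut(\bar M)$, hypotheses (i) and (ii) carry over to the new data (for (i), $\pi(\sigma_0(a))=\alpha_0(\pi(a))\neq\alpha_0(\pi(c_i))=\pi(c'_i)$ as $\alpha_0$ is injective; for (ii), applying $\alpha_0$ shows $(\pi(\sigma_0(a)),\pi(\bar c'))$ has the same $\bar M$-type as $(\pi(a),\pi(\bar c))$, hence as $(\pi(a'),\pi(\bar c'))$). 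So from now on $\bar c=\bar c'$, with $\pi(a),\pi(a')\notin\{\pi(c_i):i\}$ and $(\pi(a),\pi(\bar c))\equiv(\pi(a'),\pi(\bar c))$ in $\bar M$.

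Now I would build the pair directly. By (ii) and homogeneity of the saturated model $\bar M$, choose $\alpha\in\Aut(\bar M)$ fixing each coordinate of $\pi(\bar c)$ with $\alpha(\pi(a))=\pi(a')$. Writing $a=(v,\pi(a))$, $a'=(v',\pi(a'))$ with $v,v'\in\bar S^1$, use that any two elements of the circular order have the same type over $\varnothing$ to choose $f\in\Aut(\bar S^1)$ with $f(v)=v'$. Set $\sigma=(\alpha,(f_e)_{e\in\bar M})$ with $f_{\pi(a)}=f$ and $f_e=\mathrm{id}$ otherwise. Then $\sigma(a)=(f(v),\alpha(\pi(a)))=(v',\pi(a'))=a'$, and, since $\pi(c_i)\neq\pi(a)$ by (i), we get $f_{\pi(c_i)}=\mathrm{id}$ and $\alpha(\pi(c_i))=\pi(c_i)$, so $\sigma(c_i)=c_i$ for each $i$; hence $\sigma(a,\bar c)=(a',\bar c)=(a',\bar c')$.

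I do not expect a real obstacle here: the argument amounts to restating ``same type'' as ``same $\Aut(\mathfrak{C})$-orbit'' and then assembling the orbit map from an $\bar M$-part and fiberwise $\bar S^1$-parts. The only genuine use of the hypotheses is that (i) makes the fiber over $\pi(a)$ disjoint from those over the $\pi(c_i)$, so the fiber-automorphism needed to move $v$ to $v'$ is free and does not disturb $\bar c$; the only mildly fiddly point is checking that (i) and (ii) survive the reduction to $\bar c=\bar c'$.
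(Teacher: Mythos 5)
Your proof is correct and takes essentially the same approach as the paper: both exhibit an automorphism of $\mathfrak{C}$ of the form $(\alpha,(f_e)_e)$ whose $\bar M$-part is chosen via (ii) and whose fiber maps are defined by cases, with (i) guaranteeing the case split is well-defined. The only cosmetic difference is that you first normalize to $\bar c=\bar c'$ via (iii) and then take identity fiber maps away from $\pi(a)$, whereas the paper skips the normalization and instead reuses the fiber maps from (iii) on the fibers over the $\pi(c_i)$; the two organizations are interchangeable.
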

\begin{proof}
    By (iii), let $(\alpha,(f_e)_{e\in\bar{M}})$ be the data associated to an automorphism of $\mathfrak{C}$ taking $\bar{c}$ to $\bar{c}'$. Also, by (ii), let $\beta$ be an automorphism of $\bar{M}$ taking $(\pi(a),\pi(\bar{c}))$ to $(\pi(a'),\pi(\bar{c}'))$. Finally, using that any two elements of $\bar{S}^1$ have the same type, let $f$ be an automorphism of $\bar{S}^1$ taking the first coordinate of $a$ to the first coordinate of $a'$. Now, for $e\in\bar{M}$, define an automorphism $g_e$ of $\bar{S}^1$ by cases:
    \[
      g_e =
      \begin{cases}
        f & \text{if }e=\pi(a) \\
        f_e & \text{if }e=\pi(c_i)\text{ for some }i\\
        \mathrm{id}_{\bar{S}^1} & \text{otherwise}.
      \end{cases}
    \] By condition (i), this is well-defined, and the automorphism of $\mathfrak{C}$ associated to $(\beta,(g_e)_{e\in\bar{M}})$ then takes $(a,\bar{c})$ to $(a',\bar{c}')$, as needed.
\end{proof}

\begin{corollary}
    Suppose $C\subseteq E(\mathfrak{C})=\bar{S}^1\times\bar{M}$ and $a\in E(\mathfrak{C})$ are such that (i) $\pi(a)\notin\pi(C)$, and (ii) $\tp(\pi(a)/\pi(C))$ does not divide in $\bar{M}$. Then $\tp(a/C)$ does not divide in $\mathfrak{C}$.
\end{corollary}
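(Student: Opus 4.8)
The plan is to argue by contraposition: assuming $\tp(a/C)$ divides in $\mathfrak{C}$, I will produce, from a sequence witnessing this, a realization forcing $\tp(\pi(a)/\pi(C))$ to divide in $\bar M$, contradicting hypothesis (ii). Since dividing of a type over $\varnothing$ is always witnessed by a single formula with finitely many parameters, I first fix a finite subtuple $\bar c=(c_1,\dots,c_n)$ from $C$ and an $L_{\mathrm{cyc}}(\bar c)$-formula $\phi(x,\bar c)\in\tp(a/C)$ that divides over $\varnothing$; by the usual Ramsey argument, let $(\bar c_j)_{j<\omega}$ be a $\varnothing$-indiscernible sequence in $\mathfrak{C}$ with $\bar c_0=\bar c$ and $\{\phi(x,\bar c_j):j<\omega\}$ $k$-inconsistent for some $k\in\omega$. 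By hypothesis (i) we have $\pi(a)\neq\pi(c_i)$ for each $i$.

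Next I would push this sequence down through $\pi$. Since the $B$-sort of $\mathfrak{C}$, equipped with its $L$-structure, is exactly $\bar M$, and since $\pi$ is $\varnothing$-definable, every $L$-formula about $\pi(\bar c_{j_1}),\dots,\pi(\bar c_{j_m})$ is (the interpretation of) an $L_{\mathrm{cyc}}$-formula about $\bar c_{j_1},\dots,\bar c_{j_m}$; hence $(\pi(\bar c_j))_{j<\omega}$ is a $\varnothing$-indiscernible sequence in $\bar M$ with first term $\pi(\bar c)$. Now $\tp(\pi(a)/\pi(C))$ does not divide over $\varnothing$, and neither then does its restriction $\tp(\pi(a)/\pi(\bar c))$, since every formula of the restriction lies in the full type; so the partial type $\bigcup_{j<\omega}\tp(\pi(a)/\pi(\bar c))(z;\pi(\bar c_j))$ is consistent, realized by some $a^{*}\in\bar M$. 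Thus $(a^{*},\pi(\bar c_j))\equiv(\pi(a),\pi(\bar c))$ in $\bar M$ for every $j$; in particular $a^{*}\neq\pi(c_{ji})$ for all $i,j$, as the same holds of $\pi(a)$ and $\pi(\bar c)$.

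Finally, fix any $u\in\bar S^1$ and put $a'=(u,a^{*})\in E(\mathfrak{C})$, so that $\pi(a')=a^{*}$. For each $j$ the pairs $(a',\bar c_j)$ and $(a,\bar c)$ meet the hypotheses of Lemma 4.2: $\pi(a')=a^{*}$ avoids the coordinates of $\pi(\bar c_j)$ and $\pi(a)$ avoids those of $\pi(\bar c)$; $(\pi(a'),\pi(\bar c_j))\equiv(\pi(a),\pi(\bar c))$ in $\bar M$ by the previous paragraph; and $\bar c_j\equiv\bar c$ in $\mathfrak{C}$ by indiscernibility. Lemma 4.2 then gives $(a',\bar c_j)\equiv(a,\bar c)$ in $\mathfrak{C}$ for every $j$, so $\mathfrak{C}\models\phi(a',\bar c_j)$ for all $j<\omega$ (because $\mathfrak{C}\models\phi(a,\bar c)$), contradicting the $k$-inconsistency of $\{\phi(x,\bar c_j):j<\omega\}$. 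Hence $\tp(a/C)$ does not divide in $\mathfrak{C}$.

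The step most in need of care is the reduction to a single formula with finitely many parameters from $C$: Lemma 4.2 is stated only for finite tuples, while $C$ may be infinite, so this reduction is what makes the lemma applicable. Everything else is bookkeeping — keeping the enumerations of $\bar c$ and $\bar c_j$ matched up, checking that $\varnothing$-indiscernibility in $\mathfrak{C}$ projects to $\varnothing$-indiscernibility in $\bar M$, and using that non-dividing of a type over $\varnothing$ is inherited by any subtype.
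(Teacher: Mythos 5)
Your proof is correct and follows essentially the same approach as the paper's: reduce to a finite tuple, push the indiscernible sequence down through $\pi$, use non-dividing of $\tp(\pi(a)/\pi(C))$ to find a realization $a^*\in\bar M$ good for all shifts, lift arbitrarily to the $E$-sort, and invoke Lemma 4.2 to conclude. The paper phrases it directly via the "consistency of all shifts" criterion for non-dividing rather than by contradiction from $k$-inconsistency, but the content is identical.
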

\begin{proof}
    Let $(\bar{c}_i)_{i\in\omega}$ be an indiscernible sequence, where $\bar{c}_0$ is a finite tuple from $C$. We wish to find $a'$ such that $(a',\bar{c}_i)$ and $(a,\bar{c}_0)$ have the same type in $\mathfrak{C}$ for every $i\in\omega$. Since $\tp(\pi(a)/\pi(C))$ does not divide, we may (picking a first coordinate arbitrarily) find some $a'\in E(\mathfrak{C})$ such that $(\pi(a'),\pi(\bar{c}_i))$ and $(\pi(a),\pi(\bar{c}_0))$ have the same type in $\bar{M}$ for every $i\in\omega$.  But, since $\pi(a)\notin\pi(C)$, this gives the desired result by Lemma 4.2.
\end{proof}

Now we can give the desired characterization.

\begin{lemma} Let $\phi(z,b)$ be an $L({\bar{M}})$-formula. Then $\phi(\pi(x),b)$ forks in $\mathfrak{C}$ if and only if there is an $L$-formula $\theta(z)$ and an $L(\bar{M})$-formula $\psi(z,d)$ such that $\theta(z)$ is algebraic, $\psi(z,d)$ forks in $\bar{M}$, and $\phi(z,b)\vdash\theta(z)\vee\psi(z,d)$.
\end{lemma}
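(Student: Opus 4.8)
The plan is to prove the remaining (forward) implication by contraposition, the converse being Lemma 4.1. So I would assume that $\phi(z,b)$ does not imply any disjunction $\theta(z)\vee\psi(z,d)$ with $\theta$ an algebraic $L$-formula and $\psi(z,d)$ an $L(\bar M)$-formula forking in $\bar M$, and aim to exhibit a complete type over a small elementary submodel of $\mathfrak C$ that contains $\phi(\pi(x),b)$ and does not fork over $\varnothing$; this suffices, since any formula lying in a non-forking complete type fails to fork.

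First I would extract from the hypothesis the following statement internal to $\bar M$: for every algebraic $L$-formula $\theta(z)$, the $L(\bar M)$-formula $\phi(z,b)\wedge\neg\theta(z)$ does not fork over $\varnothing$ in $\bar M$. Indeed, were it to fork, then, using the tautology $\phi(z,b)\vdash\theta(z)\vee(\phi(z,b)\wedge\neg\theta(z))$ and the fact that $\phi(z,b)\wedge\neg\theta(z)$ is an $L(\bar M)$-formula with parameter $b$ only (as $\theta$ is parameter-free), $\phi(z,b)$ would imply a disjunction of the forbidden shape. Hence the partial type $\Sigma(z):=\{\phi(z,b)\}\cup\{\neg\theta(z):\theta\text{ an algebraic }L\text{-formula}\}$ over $\bar M$ is consistent and does not fork over $\varnothing$, since every finite subset of it containing $\phi(z,b)$ is equivalent to $\phi(z,b)\wedge\neg\theta(z)$ for a single algebraic $\theta$ — a finite disjunction of algebraic $L$-formulas being algebraic — and such a formula neither forks over $\varnothing$ nor is inconsistent.

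Next I would fix a small model $N\prec\mathfrak C$ with $b\in N$ and set $N_0:=B(N)\prec\bar M$, so that $b\in N_0$ and $\Sigma$ is a partial type over $N_0$. Because the forking $L(\bar M)$-formulas form an ideal, a partial type over $N_0$ not forking over $\varnothing$ extends to a complete type over $N_0$ not forking over $\varnothing$; let $q_0\in S_z(N_0)$ be such a completion of $\Sigma$. Then $q_0$ is non-algebraic, since it contains $\neg\theta(z)$ for every algebraic $\theta$, so — $N_0$ being small — I could pick a realization $e\in\bar M\setminus N_0$ of $q_0$. By construction $\phi(e,b)$ holds, and $\tp(e/N_0)=q_0$ does not fork over $\varnothing$, hence does not divide over $\varnothing$, in $\bar M$. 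Choosing any $u\in\bar{S}^1$ and setting $a:=(u,e)\in E(\mathfrak C)$, so that $\pi(a)=e$, I would then apply Corollary 4.3 with $C:=E(N)$: its hypotheses hold because $\pi(a)=e$ lies outside $N_0=\pi(E(N))=\pi(C)$, and because $\tp(\pi(a)/\pi(C))=\tp(e/N_0)$ does not divide in $\bar M$. This yields that $\tp(a/E(N))$ does not divide over $\varnothing$ in $\mathfrak C$; since $B(N)=\pi(E(N))\subseteq\dcl(E(N))$, the type $\tp(a/N)$ also does not divide, and being complete it does not fork, over $\varnothing$. Finally, as $\phi(\pi(a),b)=\phi(e,b)$ holds, $\phi(\pi(x),b)\in\tp(a/N)$, so $\phi(\pi(x),b)$ does not fork over $\varnothing$ in $\mathfrak C$, which is what was needed.

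I expect the conceptual content to be essentially captured already by Corollary 4.3, so the one point that will require care is the reduction to a small model: Corollary 4.3 is stated for a genuine element $a$ of $E(\mathfrak C)$, and to meet its hypothesis (i) I must realize the relevant type of $\bar M$ at a point $e=\pi(a)$ lying \emph{outside} the base set $N_0$. This is exactly where the non-algebraicity of $q_0$ — equivalently, the presence of all the formulas $\neg\theta(z)$ in $\Sigma$ — gets used. The rest is bookkeeping with standard facts about forking: that the forking formulas form an ideal, that a non-forking partial type completes to a non-forking type, and that a formula contained in a non-forking complete type does not fork.
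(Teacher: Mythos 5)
Your overall strategy is the same as the paper's (reduce to $\bar M$, use the ideal of formulas implying an algebraic-or-forking disjunction to find a good $\pi(a)$ via compactness, then invoke Corollary~4.3), but there is a genuine gap at the final step. After applying Corollary~4.3 you obtain that $\tp(a/E(N))$, and hence $\tp(a/N)$, does not \emph{divide} over $\varnothing$, and you then assert that ``being complete it does not fork over $\varnothing$.'' That inference is not valid. Forking-equals-dividing statements for complete types hold over the base set itself (a complete type over a model $N$ forks over $N$ iff it divides over $N$; Chernikov--Kaplan gives forking equals dividing over an extension base), but here you need non-forking over $\varnothing$, a strictly smaller set, and $\varnothing$ is \emph{not} an extension base in $\mathfrak{C}$ --- indeed that is precisely the point of the construction. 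Since you fix $N$ before anything about the (alleged) forking configuration is known, the dividing formulas that would witness the forking of $\phi(\pi(x),b)$ may well have parameters lying outside $N$, and then ``$\tp(a/N)$ does not divide over $\varnothing$'' simply fails to rule out $\phi(\pi(x),b)$ forking.

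The repair is to let the base depend on the forking data, exactly as the paper does. Argue by contradiction: if $\phi(\pi(x),b)$ forks over $\varnothing$ then $\phi(\pi(x),b)\vdash\bigvee_{i<k}\chi_i$ with each $\chi_i$ dividing over $\varnothing$; let $C$ be the finite set of parameters appearing in $b$ and the $\chi_i$. Then \emph{every} $a\models\phi(\pi(x),b)$ has some $\chi_i\in\tp(a/C)$, so $\tp(a/C)$ divides over $\varnothing$. Now run your construction with $N$ chosen to contain $C$ (or equivalently, run the contrapositive argument for every finite $C'\supseteq\{b\}$, choosing $N\supseteq C'$): the $a$ you produce has $\tp(a/C)$ not dividing over $\varnothing$, a contradiction. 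Everything else in your argument then goes through.

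A smaller point: you justify that $q_0$ is unrealized in $N_0$ by noting it contains $\neg\theta(z)$ for every algebraic $\theta$. But those $\theta$ are algebraic over $\varnothing$, so this only shows $q_0$ is not realized in $\acl(\varnothing)=M$, not that it is unrealized in $N_0\supsetneq M$. What actually does the work is non-forking: if $q_0$ were realized at some $c\in N_0$, then $c\notin M$ (since $q_0\supseteq\Sigma$), whence $z=c$ divides over $\varnothing$, contradicting that $q_0$ does not fork over $\varnothing$. This is exactly the paper's step ``(2) and (3) together imply $\pi(a)\notin\pi(C)$,'' and it needs the non-forking condition, not just the non-algebraicity condition.
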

\begin{proof}
    The backwards direction is Lemma 4.1, so we show the forwards direction. Suppose for contradiction that $\phi(\pi(x),b)$ forks in $\mathfrak{C}$ but that the desired conclusion does not hold. Then there are some finite $C\subset E(\mathfrak{C})$ and $D\subset B(\mathfrak{C})=\bar{M}$ such that, for any $a\in \bar{S}^1\times\bar{M}$ realizing $\phi(\pi(x),b)$, $\tp(a/C,D)$ divides. Expanding $C$ if necessary, we may assume without loss of generality that $\pi(C)\supseteq D$, so that $\tp(a/C)$ divides for any $a\models\phi(\pi(x),b))$.
    
    Now, by hypothesis, $\phi(z,b)$ is consistent with $\neg\theta(z)\wedge\neg\psi(z,d)$ for every algebraic $L$-formula $\theta(z)$ and every $L(\pi(C))$-formula $\psi(z,d)$ that forks in $\bar{M}$. Formulas of these latter two kinds form an ideal, so by compactness (and picking an arbitrary first coordinate) we may find some $a\in E(\mathfrak{C})$ such that:
    \begin{enumerate}
        \item $\pi(a)\models\phi(z,b)$.
        \item $\tp(\pi(a)/\varnothing)$ is non-algebraic.
        \item $\tp(\pi(a)/\pi(C))$ does not fork in $\bar{M}$.
    \end{enumerate} (2) and (3) together imply that $\pi(a)\notin\pi(C)$. By (3) and Corollary 4.3, this implies that $\tp(a/C)$ does not divide in $\mathfrak{C}$. By (1), this contradicts the choice of $C$.
\end{proof}

\subsection{Binary Tree}
Let $L=\{\leqslant\}$ be the language of trees and let $M$ be the tree $(2^{<\omega},\leqslant)$. Note that $M$ is NIP, and indeed dp-minimal, since any tree is; see \citep{simon_trees}. Let $\bar{M}$ be a saturated model of $\mathrm{Th}(M)$. Note that $M$ is a meet-tree, and so $\bar{M}$ is as well. We have $\acl(\varnothing)=M$, so that $M$ is the unique minimal elementary substructure of $\bar{M}$. By a back-and-forth argument, any two elements $b,b'\in\bar{M}\setminus M$ have the same type over $\varnothing$, and the non-algebraic $1$-types over $M$ correspond to paths through $2^{<\omega}$: given $f\in 2^{\omega}$, there is a unique $1$-type over $M$ determined by $\{x> f|_n:n\in\omega\}$, and conversely any unrealized $1$-type over $M$ is of that form.

Now, suppose $I=(b_i)_{i\in\omega}$ is any non-constant indiscernible sequence of elements of $\bar{M}$; then, since $M=\acl(\varnothing)$, $I$ is indiscernible over $M$, and so all the elements of $I$ determine the same path through $M$. In particular, for every $n\in\omega$, we have $b_1\wedge\dots\wedge b_n\notin M$.

\begin{lemma}
    Let $b\in\bar{M}\setminus M$. Then the formula $\phi(z,b)\equiv z\leqslant b$ does not imply a disjunction of an algebraic formula without parameters and an $L(\bar{M})$-formula that forks.
\end{lemma}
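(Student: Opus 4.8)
The plan is to argue by contradiction: suppose $z\leqslant b$ implies $\theta(z)\vee\psi(z,d)$, where $\theta(z)$ is an algebraic $L$-formula without parameters and $\psi(z,d)$ is an $L(\bar M)$-formula that forks. I will exhibit an element $a\in\bar M$ with $a\leqslant b$, $\neg\theta(a)$ and $\neg\psi(a,d)$, which contradicts the implication. Two reductions make this manageable. Since $\theta$ is algebraic without parameters, $\theta(\bar M)\subseteq\acl(\varnothing)=M$, so it suffices to find $a\leqslant b$ with $a\notin M$ and $\neg\psi(a,d)$. And $\neg\psi(a,d)$ will follow once I ensure that $\tp(a/d)$ is finitely satisfiable in $M$: for then $\psi(a,d)$ would force $\psi(z,d)$ to have a realization in $M=\acl(\varnothing)$, and I will show separately that no formula forking over $\varnothing$ has a realization in $\acl(\varnothing)$.

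For that last point, unpack forking: a forking formula implies a finite disjunction of dividing formulas, and any realization of it satisfies one of the disjuncts, so it is enough to see that no formula $\chi(z,e)$ that divides over $\varnothing$ has a realization $w\in M$. Recall that for each $k$ the set of level-$k$ nodes of $2^{<\omega}$ is finite and defined by an $L$-formula without parameters, and that $M$ is exactly the set of elements of finite level. So, given such $w$ of level $k$, and an indiscernible sequence $(e_i)_{i<\omega}$ with $e_0=e$ witnessing $k'$-inconsistency of $\{\chi(z,e_i)\}_i$, the $L$-sentence $\exists z\,(\chi(z,e_0)\wedge\mathrm{level}(z)=k)$ holds and hence, by indiscernibility over $\varnothing$, holds with $e_0$ replaced by each $e_i$; choosing witnesses $w_i$ (automatically of level $k$, hence in $M$), the finitely many level-$k$ nodes must accommodate infinitely many indices $i$, so some fixed node satisfies infinitely many of the $\chi(z,e_i)$, contradicting $k'$-inconsistency.

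It remains to build $a$. Since $b\in\bar M\setminus M$ has no finite level, for each $n$ it has a unique predecessor $f|_n\in M$ of level $n$, and the $f|_n$ form an increasing chain below $b$. Fix a non-principal ultrafilter $\mathcal U$ on $\omega$ and let $a$ realize the $\mathcal U$-limit of the types $\tp(f|_n/\,d\cup\{b\})$; this type is finitely satisfiable in $\{f|_n:n<\omega\}\subseteq M$, so $\tp(a/d)$ is finitely satisfiable in $M$. Since each $f|_n<b$, the formula $z<b$ lies in every $\tp(f|_n/\,d\cup\{b\})$ and so in the limit, giving $a<b$. Since for each $k$ only $f|_k$ among the $f|_n$ satisfies $z=f|_k$, the formula $z\neq f|_k$ lies in the limit, so $a\neq f|_k$ for every $k$; as $a<b$ and the only finite-level nodes below $b$ are the $f|_k$ while every element of $M$ has finite level, we get $a\notin M$. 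Hence $a$ has all the required properties, completing the contradiction.

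The reductions and the ultrafilter limit are routine; the one genuinely theory-specific ingredient — and the step I expect to demand the most care — is the assertion that a formula forking over $\varnothing$ cannot be realized in $\acl(\varnothing)=M$, which is precisely where the stratification of $\acl(\varnothing)$ into $\varnothing$-definable finite sets (the levels of the tree) is used.
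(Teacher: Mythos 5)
Your proof is correct but takes a genuinely different route from the paper's. The paper argues by contradiction: supposing $\phi(z,b)\wedge\neg\theta(z)$ forks over $\varnothing$, it passes (via $M=\acl(\varnothing)$ and, using NIP, the equality of forking and dividing over the model $M$) to an $M$-indiscernible sequence $(b_i)$ along which $\bigwedge_{i\leqslant k}\phi(z,b_i)\wedge\neg\theta(z)$ is inconsistent; since $\bigwedge_{i\leqslant k}(z\leqslant b_i)$ is equivalent to $z\leqslant(b_1\wedge\dots\wedge b_k)$ in a meet-tree, the meet $b_1\wedge\dots\wedge b_k$ satisfies $\theta$ and hence lies in $M$, contradicting the observation made just before the lemma. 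You instead exhibit, for any candidate algebraic $\theta$ and forking $\psi(z,d)$, a direct witness $a\leqslant b$ with $a\notin M$ and $\tp(a/d)$ finitely satisfiable in $M$, obtained as an ultrafilter limit (coheir) of the predecessors $f|_n$ of $b$; combined with the fact that a formula forking over $\varnothing$ cannot be realized in $\acl(\varnothing)$, this yields $a\models\phi\wedge\neg\theta\wedge\neg\psi$. Your route avoids NIP and the meet-tree combinatorics entirely and hews more closely to the definition of forking; the paper's is shorter because the meet structure collapses the conjunction of translates of $\phi$ into a single algebraicity constraint in one step. One small correction: the auxiliary fact you flag as ``the genuinely theory-specific ingredient'' — that a formula forking over $\varnothing$ has no realization in $\acl(\varnothing)$ — is in fact completely general: your pigeonhole argument works verbatim with any algebraic $\varnothing$-formula isolating $\tp(w/\varnothing)$ in place of the level formula, for any theory. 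What is actually theory-specific in your proof is the construction of the coheir $a$ via the path $(f|_n)_n$ of predecessors of $b$, which uses the description of non-algebraic $1$-types over $M$ as paths through $2^{<\omega}$.
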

\begin{proof}
    It suffices to show that $\phi(z,b)\wedge\neg\theta(z)$ does not fork over $\varnothing$ for every algebraic $\theta(z)$ without parameters. So suppose otherwise; then $\phi(z,b)\wedge\neg\theta(z)$ implies a finite disjunction of formulas that each divide over $\varnothing$. Since $M=\acl(\varnothing)$, a formula divides over $\varnothing$ if and only if it divides over $M$, and hence $\phi(z,b)\wedge\neg\theta(z)$ implies a finite disjunction of formulas that each divide over $M$; in other words $\phi(z,b)\wedge\neg\theta(z)$ forks over $M$. But $\mathrm{Th}(M)$ is NIP, so forking and dividing coincide over models, and hence $\phi(z,b)\wedge\neg\theta(z)$ divides over $M$. Let $(b_i)_{i\in\omega}$ be an indiscernible sequence witnessing this; then for some $n\in\omega$ we have $\phi(z,b_1)\wedge\dots\wedge\phi(z,b_n)\vdash\theta(z)$. In particular, we have $b_1\wedge\dots\wedge b_n\models\theta(z)$. But $\theta$ is algebraic, so this forces $b_1\wedge\dots\wedge b_n\in M$, contradicting the remark above this lemma.
\end{proof}

\subsection{Main example} Now we can give the main example. Let $M$ be the tree from Section 2. Let $\mathfrak{C}$ be a saturated model of $\mathrm{Th}(M_{\mathrm{cyc}})$, with $E$-sort $\bar{S}^1\times\bar{M}$. Pick any $b\in\bar{M}\setminus M$, and let $\phi(z,b)\equiv z\leqslant b$.

\begin{lemma}
    The formula $\phi(\pi(x),b)$ does not fork.
\end{lemma}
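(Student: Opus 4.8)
The plan is to apply Lemma 4.4 to the $L(\bar{M})$-formula $\phi(z,b) \equiv z \leqslant b$, reducing the non-forking of $\phi(\pi(x),b)$ in $\mathfrak{C}$ to a statement purely about $\bar{M}$. By Lemma 4.4, $\phi(\pi(x),b)$ forks in $\mathfrak{C}$ if and only if there is an algebraic $L$-formula $\theta(z)$ and an $L(\bar{M})$-formula $\psi(z,d)$ forking in $\bar{M}$ with $\phi(z,b) \vdash \theta(z) \vee \psi(z,d)$. So it suffices to show that no such $\theta$ and $\psi$ exist.

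But this is precisely the content of Lemma 4.6, which says exactly that the formula $z \leqslant b$ (for $b \in \bar{M} \setminus M$) does not imply a disjunction of an algebraic parameter-free formula and an $L(\bar{M})$-formula that forks. Hence by Lemma 4.4 the formula $\phi(\pi(x),b)$ does not fork in $\mathfrak{C}$, as desired.

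I do not anticipate any real obstacle here: the work has already been done in assembling Lemma 4.4 (the characterization of forking in $M_{\mathrm{cyc}}$) and Lemma 4.6 (the tree-theoretic computation showing $z \leqslant b$ is far from algebraic modulo forking). The only thing to be careful about is matching hypotheses — Lemma 4.4 is stated for an arbitrary $L(\bar{M})$-formula $\phi(z,b)$, and here $b \in \bar{M} \setminus M$, so $\phi(z,b) \equiv z \leqslant b$ is indeed such a formula; and Lemma 4.6 applies verbatim to this same $b$ and $\phi$. So the proof is essentially a two-line citation of these two lemmas, and I would write it as such.

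\begin{proof}
    By Lemma 4.4, the formula $\phi(\pi(x),b)$ forks in $\mathfrak{C}$ if and only if there is an algebraic $L$-formula $\theta(z)$ and an $L(\bar{M})$-formula $\psi(z,d)$ that forks in $\bar{M}$ such that $\phi(z,b)\vdash\theta(z)\vee\psi(z,d)$. But by Lemma 4.6, since $b\in\bar{M}\setminus M$, no such $\theta(z)$ and $\psi(z,d)$ exist. Hence $\phi(\pi(x),b)$ does not fork.
\end{proof}
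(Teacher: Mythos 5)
Your proof is correct and takes exactly the same approach as the paper, which simply cites the forking characterization (Lemma 4.4) together with the tree-theoretic computation. The only slip is a reference label: the result you invoke about $z \leqslant b$ not implying a disjunction of an algebraic formula and a forking formula is Lemma 4.5, not Lemma 4.6 (the latter is the statement you are proving); this is a bookkeeping matter and does not affect the argument.
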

\begin{proof}
    This follows from Lemma 4.4 and Lemma 4.5.
\end{proof}

\begin{lemma}
    Let $\mu$ be a automorphism-invariant Keisler measure on the $E$-sort of $\bar{M}$. Then $\mu(\phi(\pi(x),b))=0$.
\end{lemma}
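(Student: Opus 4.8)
The plan is to show that $\mu(\phi(\pi(x),b))\le 1/n$ for every $n\in\omega$, by exhibiting $n$ $\mathrm{Aut}(\mathfrak{C})$-conjugates of $\phi(\pi(x),b)$ that are pairwise disjoint modulo a $\mu$-null set; letting $n\to\infty$ then gives $\mu(\phi(\pi(x),b))=0$. To build the conjugates I would use that the non-algebraic $1$-types over $M$ correspond to the paths through $2^{<\omega}$ and that all elements of $\bar M\setminus M$ are conjugate. So: fix infinitely many pairwise distinct paths $f_1,f_2,\dots\in 2^{\omega}$, and for each $i$ pick $b_i\in\bar M$ realizing the $1$-type over $M$ determined by $f_i$; then $b_i\in\bar M\setminus M$, so $b_i\equiv b$, and hence $\phi(\pi(x),b_i)$ is a conjugate of $\phi(\pi(x),b)$, in particular $\mu(\phi(\pi(x),b_i))=\mu(\phi(\pi(x),b))$.

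Next I would compute the pairwise intersections. Since $\bar M$ is a meet-tree, $\phi(\pi(x),b_i)\wedge\phi(\pi(x),b_j)$ defines $\{x:\pi(x)\le b_i\wedge b_j\}$; and if $k$ is least with $f_i(k)\ne f_j(k)$, a short computation in the meet-tree $\bar M$ (using $b_i\ge f_i|_{k+1}$ and $b_j\ge f_j|_{k+1}$, with $f_i|_{k+1}$ and $f_j|_{k+1}$ incomparable) gives $b_i\wedge b_j=f_i|_k\in M$. Thus it suffices to show that $\mu(\phi(\pi(x),\sigma))=0$ for every $\sigma\in M$. For such a $\sigma$, say at level $\ell$, the formula (with parameter $\sigma$) ``there are exactly $\ell+1$ elements $\le\sigma$'' holds in $M$, hence in $\bar M$; so $\{z:z\le\sigma\}$ is a finite subset of $M$, and $\phi(\pi(x),\sigma)$ is a finite disjoint union of fibers $\{x:\pi(x)=p\}$ with $p\in M$. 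So everything reduces to: every fiber $\{x:\pi(x)=p\}$ has $\mu$-measure $0$.

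This last point is the crux, and it is the only place the circular order in the fibers is used; I expect making it clean to be the main work, though the argument itself is short. Suppose $\mu(\pi(x)=p)>0$; normalizing gives a weight-$1$ Keisler measure $\nu$ on the fiber over $p$, which we identify with $\bar S^1$. By the description of $\mathrm{Aut}(\mathfrak{C})$ as pairs $(\alpha,(f_e)_{e\in\bar M})$, every automorphism of $\bar S^1$ lifts to an automorphism of $\mathfrak{C}$ which fixes $p$ and acts as the given map on the fiber over $p$ (and trivially on the other fibers); hence $\nu$ is $\mathrm{Aut}(\bar S^1)$-invariant. But no such measure can exist: singletons are $\nu$-null (there are infinitely many pairwise disjoint conjugate singletons in $\bar S^1$), and for any open arc $(a,c)$, taking an interior point $d$ and using that all ordered pairs of distinct elements of $\bar S^1$ are conjugate gives $\nu(a,c)=\nu(a,d)+\nu(\{d\})+\nu(d,c)=2\,\nu(a,c)$, so every open arc is $\nu$-null; writing $\bar S^1$ as a union of two complementary open arcs and two points then forces $\nu(\bar S^1)=0$, contradicting that $\nu$ has weight $1$.

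Finally, to assemble: the sets $\phi(\pi(x),b_1),\dots,\phi(\pi(x),b_n)$ are pairwise $\mu$-almost disjoint by the second paragraph, and each has measure $\mu(\phi(\pi(x),b))$ by the first, so by finite additivity of $\mu$
\begin{align*}
1\ \geq\ \mu\Big(\bigvee\nolimits_{i\le n}\phi(\pi(x),b_i)\Big)
&\ \geq\ \sum\nolimits_{i\le n}\mu\big(\phi(\pi(x),b_i)\big)-\sum\nolimits_{i<j}\mu\big(\phi(\pi(x),b_i)\wedge\phi(\pi(x),b_j)\big)\\
&\ =\ n\cdot\mu\big(\phi(\pi(x),b)\big).
\end{align*}
Hence $\mu(\phi(\pi(x),b))\le 1/n$ for all $n$, so $\mu(\phi(\pi(x),b))=0$, as desired.
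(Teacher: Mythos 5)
Your proof is correct, and while the overall shape is the same as the paper's — pick $n$ conjugates of $\phi(\pi(x),b)$ with pairwise intersections of measure zero and conclude $\mu(\phi(\pi(x),b))\le 1/n$ — the way the pairwise intersections are handled is genuinely different. The paper chooses the $b_i$ above $n$ distinct nodes at a fixed level $m$ of $M$ (with $2^m>n$), so that each $\phi(z,b_i)\wedge\phi(z,b_j)$ implies the algebraic formula $P_m(z)$ (``height at most $m$''); it then gets $\mu(P_m(\pi(x)))=0$ for free from the already-established machinery, namely Lemma 4.1 (algebraic $L$-formulas pull back to forking $L_{\mathrm{cyc}}$-formulas) together with the general fact that a forking formula must be null under any invariant measure. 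You instead compute $b_i\wedge b_j$ exactly (a specific node $f_i|_k\in M$), observe that the resulting intersection is a finite union of fibers $\pi^{-1}(p)$ with $p\in M$, and then show by hand that each such fiber is $\mu$-null: you lift to an $\mathrm{Aut}(\bar S^1)$-invariant weight-one measure on the fiber and use the arc-splitting/transitivity argument to show no such measure exists. In effect you are re-deriving, concretely, the one consequence of Lemma 4.1 and of the ``forking $\Rightarrow$ null'' fact that the proof needs. The trade-off is clear: the paper's route is shorter given the infrastructure already built in Section 4.1, while yours is more self-contained and makes the role of the circular order completely explicit (it is precisely what kills the fibers). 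One small point worth flagging: your identity $b_i\wedge b_j=f_i|_k$ tacitly uses that $\bar M$ has no new elements strictly between $f_i|_k$ and $f_i|_{k+1}$; this follows from $M\prec\bar M$ (as recorded in Section 4.2), but it deserves a word. Also, for the decomposition argument you need that $\phi(\pi(x),\sigma)$ for $\sigma\in M$ is a finite \emph{disjoint} union of fibers over elements of $M$ — that the finitely many solutions of $z\le\sigma$ in $\bar M$ all lie in $M$ is again by elementarity and should be said.
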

\begin{proof}
    Suppose otherwise that $\mu(\phi(\pi(x),b))=\varepsilon>0$. Recalling that any two elements of $\bar{M}\setminus M$ have the same type over $\varnothing$, we thus have $\mu(\phi(\pi(x),b'))=\varepsilon$ for every $b'\in\bar{M}\setminus M$. Let $n$ be such that $n\varepsilon>1$, and let $m$ be any natural number with $2^m>n$; I claim that $\mu\left(P_m(\pi(x))\right)>0$, where $P_m(z)\equiv\exists^{\leqslant m}u(u\leqslant z)$ is the formula of $\bar{M}$ expressing that $z$ is of height at most $m$.
    
    To see this, suppose $\mu\left(P_m(\pi(x))\right)=0$. Since $2^m>n$, we may find \textit{distinct} elements $c_1,\dots,c_n\in M$ of height precisely $m$. Pick any $b_i\in\bar{M}\setminus M$ with $b_i\geqslant c_i$; then $b_i\wedge b_j\models P_m(z)$ for each $i\neq j$. Hence $\phi(z,b_i)\wedge\phi(z,b_j)\vdash P_m(z)$ for each $i\neq j$; since $P_m(\pi(x))$ has measure $0$ under $\mu$, also $$\mu\big(\phi(\pi(x),b_i)\wedge\phi(\pi(x),b_j)\big)=0$$ for each $i\neq j$. But then $$\mu\left(\bigvee_{i\in[n]}\phi(\pi(x),b_i)\right)=\sum_{i\in[n]}\mu(\phi(\pi(x),b_i))=n\varepsilon>1,$$ a contradiction.

    So indeed $\mu(P_m(\pi(x)))>0$. But $P_m(z)$ is algebraic, so the formula $P_m(\pi(x))$ forks over $\varnothing$ by Lemma 4.1, contradicting that $\mu$ is $\varnothing$-invariant.
\end{proof}

Thus $\mathrm{Th}(M_{\mathrm{cyc}})$ is an NIP theory for which, in a saturated model, there are non-forking formulas given measure $0$ by any automorphism-invariant Keisler measure; this gives the desired example.

\newpage


\end{document}